\theoremstyle{plain}
\newtheorem{thm}{Theorem}[section]
\newtheorem{cor}[thm]{Corollary}
\newtheorem{lem}[thm]{Lemma}
\newtheorem*{ex*}{Example}
\newtheorem{bigthm}{Theorem}
\newtheorem{bigcor}[bigthm]{Corollary}
\newcommand{\N}{\mathbb{N}}
\newcommand{\C}{\mathbb{C}}
\newcommand{\R}{\mathbb{R}}
\newcommand{\Z}{\mathbb{Z}}
\def\H{(-D)}
\def\D{{D}}
\def\L{{L}}
\def\1{\boldsymbol{1}}
\def\sv{\sin \varphi}
\def\vt{\vartheta}
\def\8{\infty}
\def\8{\infty}
\def\vp{\varphi}
\DeclareMathOperator{\dist}{dist}
\title[Spherical heat kernel]
	{Sharp estimates of the spherical heat kernel}
\author[A. Nowak]{Adam Nowak}
\address{Adam Nowak, \newline
			Institute of Mathematics,
		Polish Academy of Sciences, \newline
      \'Sniadeckich 8,
      00--656 Warszawa, Poland    
      }
\email{anowak@impan.pl}
\author[P. Sj\"ogren]{Peter Sj\"ogren}
\address{Peter Sj\"ogren, \newline
			Mathematical Sciences, University of Gothenburg \newline
Mathematical Sciences, Chalmers University of Technology \newline
SE-412 96 G\"oteborg, Sweden 
      }
\email{peters@chalmers.se}
\author[T.Z. Szarek]{Tomasz Z. Szarek}
\address{Tomasz Z. Szarek,     \newline
            Institute of Mathematics,
        Polish Academy of Sciences, \newline
      \'Sniadeckich 8,
      00--656 Warszawa, Poland \newline
\indent and \newline
            Department of Mathematics, Informatics and Mechanics,
        University of Warsaw, \newline
        Banacha 2,
        02--097 Warszawa, Poland
      }
\email{szarek@impan.pl}
\begin{document}

\begin{abstract}
We prove sharp two-sided global estimates for the heat kernel associated with a Euclidean sphere of
arbitrary dimension. 
\end{abstract}

\maketitle

\footnotetext{
\emph{\noindent 2010 Mathematics Subject Classification:} primary 35K08; secondary 60J65.\\
\emph{Key words and phrases:} sphere, heat kernel, spherical Brownian motion, sharp estimate. \\
\indent	
The first and the third authors were supported by the National Science Centre of Poland within the research project OPUS 2017/27/B/ST1/01623. The third author was supported also by the Foundation for Polish Science via the START Scholarship.
}

\section{Statement of the result} \label{sec:res}

Let $S^d \subset \R^{d+1}$ be the Euclidean unit sphere of dimension $d \ge 1$ equipped with the standard non-normalized
area measure $\sigma_d$. The heat kernel $\mathcal{K}_t^d(\xi,\eta)$ on $(S^d,\sigma_d)$ is a function of the
geodesic spherical distance $\dist(\xi,\eta) = \arccos\langle\xi,\eta\rangle$, and we write it as $K^d_t(\cdot)$, i.e.,
$$
\mathcal{K}_t^d(\xi,\eta) = K_t^d\big(\dist(\xi,\eta)\big), \qquad \xi,\eta \in S^d.
$$
In this paper we prove the following.
\begin{bigthm} \label{thm:main}
Let $d \ge 1$ and $T>0$ be fixed. For all $\vp \in [0,\pi]$ and $0 < t \le T$ 
\begin{equation*}
\frac{c}{(t + \pi - \vp)^{(d-1)/2} t^{d/2}} \exp\bigg({-\frac{\vp^2}{4t}}\bigg) \le
K_t^d (\vp) \le \frac{C}{(t + \pi - \vp)^{(d-1)/2} t^{d/2}} \exp\bigg({-\frac{\vp^2}{4t}}\bigg)
\end{equation*}
holds with some constants $c,C>0$ depending only on $d$ and $T$.
\end{bigthm}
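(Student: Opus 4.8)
The starting point is the classical spectral expansion of the spherical heat kernel in terms of Gegenbauer (ultraspherical) polynomials:
\begin{equation*}
K_t^d(\vp) = \sum_{n=0}^{\infty} e^{-n(n+d-1)t}\, \frac{2n+d-1}{d-1}\, \frac{1}{\omega_{d-1}}\, C_n^{(d-1)/2}(\cos\vp),
\end{equation*}
with the obvious modification for $d=1$ (Fourier series on the circle). The plan is to split the analysis into two regimes according to how close $\vp$ is to the antipodal point $\pi$, since the factor $(t+\pi-\vp)^{-(d-1)/2}$ only differs from $t^{-(d-1)/2}$ when $\pi-\vp \gtrsim t$. \emph{Small-time, $\vp$ away from $\pi$.} Here one expects the kernel to behave like the Euclidean heat kernel on $\R^d$, i.e.\ $\asymp t^{-d/2}\exp(-\vp^2/(4t))$, which is consistent with the claimed bound since $t+\pi-\vp \asymp 1$ in this range. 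This can be obtained either from a parametrix/Minakshisundaram--Pleijel type construction on the compact manifold $S^d$, or—more in the spirit of getting \emph{global in $\vp$} sharp constants—by using the known subordination or integral formulas relating $K^d_t$ across dimensions.

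The cleanest route I would take exploits the dimension-recursion for Gegenbauer polynomials, namely the derivative/integral relations that let one pass from $S^d$ to $S^{d+2}$ (differentiation in $\cos\vp$) and the ``ascent'' formula expressing $K_t^{d+1}$ as a weighted integral of $K_t^d$ over a neighbouring distance, together with explicit base cases $d=1$ (theta function: $K_t^1(\vp)=\frac{1}{2\pi}\sum_{k\in\Z} \text{(shifted Gaussians)}$ via Poisson summation, namely $\sum_k \exp(-(\vp-2\pi k)^2/(4t))$ up to normalization) and $d=2$ (where a closed-ish form / Mehler-type formula is available). From the $d=1$ Poisson-summation expression the two-sided bound with the factor $(t+\pi-\vp)^0 = 1$ is immediate: the $k=0$ term gives $\exp(-\vp^2/(4t))$ and the remaining terms are controlled by $\exp(-(2\pi-\vp)^2/(4t)) \le \exp(-\vp^2/(4t))$ times a harmless constant (using $t\le T$). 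Then I would induct: assuming the sharp estimate on $S^d$, push it through the integral representation of $K_t^{d+1}$ in terms of $K_t^d$. The gain of the extra factor $(t+\pi-\vp)^{-1/2}$ per two dimensions is exactly what the Gaussian integral $\int \exp(-(\text{dist})^2/(4t))\,(\ldots)$ produces near $\vp=\pi$, where the integration variable ranges over an interval of length $\asymp \pi-\vp$ (or $\asymp \sqrt t$, whichever is larger), while away from $\pi$ it just reproduces the $t^{-1/2}$ Euclidean factor by Laplace's method.

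Concretely, the induction step rests on an identity of the schematic form
\begin{equation*}
K_t^{d+1}(\vp) = c_d \int_{-\pi}^{\pi} K_t^{d}\big(\psi(\vp,s)\big)\, w_d(s)\, ds
\end{equation*}
(or the analogous product formula on the sphere), where $\cos\psi = \cos\vp\cos s + \text{(lower order)}$ and $w_d$ is an explicit Jacobi-type weight; the upper bound then follows by inserting the inductive upper bound for $K_t^d$ and estimating the resulting Gaussian integral, splitting into $|s|\lesssim \sqrt t$ and $|s|\gtrsim\sqrt t$ and, separately, treating $\vp$ near $0$, $\vp$ in the bulk, and $\vp$ near $\pi$. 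The lower bound is obtained by restricting the same integral to the region where all factors are comparable to their maximum, i.e.\ $|s|\lesssim \min(\sqrt t,\ \pi-\vp)$ near the antipode and $|s|\lesssim\sqrt t$ otherwise, which already contributes the full claimed size. One must keep the geodesic distance as the correct variable throughout—the exponent $\vp^2/(4t)$ must emerge with the sharp constant $1/4$, so the Taylor expansion $\psi(\vp,s)^2 = \vp^2 + O(s^2 + s\,(\pi-\vp))$ near the diagonal and near the antipode has to be handled carefully rather than absorbed.

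The main obstacle is precisely this bookkeeping near the antipodal point $\vp=\pi$: there the naive term-by-term bounds on the Gegenbauer series degrade, the change of variables $\cos\psi=\cdots$ is singular, and one must show the \emph{polynomial} correction $(t+\pi-\vp)^{-(d-1)/2}$ appears with matching constants on both sides. I expect this to require a separate, careful argument in a boundary layer $\pi-\vp \lesssim \sqrt t$ versus $\pi-\vp\gtrsim\sqrt t$, possibly invoking the known small-time asymptotics of the heat kernel near a conjugate point on $S^d$ (where the cut locus is a single point, of codimension $d$), which is exactly the source of the $(d-1)/2$ power. Once the antipodal layer is under control, gluing it with the off-diagonal Euclidean estimate and the on-diagonal estimate via a partition of $[0,\pi]$ into $O(1)$ (up to dyadic) pieces completes the proof.
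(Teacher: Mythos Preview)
Your high-level plan---bootstrap from $d=1$ via dimension-shifting identities---is the same as the paper's, but the specific mechanics you propose have a real gap. The differentiation identity taking $S^d$ to $S^{d+2}$ reads $K_t^{d+2}(\vp) = -\frac{e^{td}}{2\pi}(\sin\vp)^{-1}\partial_\vp K_t^d(\vp)$; you cannot ``push the sharp estimate through'' this, because sharp two-sided bounds on a function say nothing about its derivative. The paper therefore does \emph{not} induct on the estimate: it applies the iterated operator $(-D)^N$, where $D = (\sin\vp)^{-1}\partial_\vp$, directly to the theta series $\vartheta_t(\vp)=\sum_{n\in\Z} W_t(\vp+2\pi n)$ and estimates the result from scratch, splitting into the two regimes $(\pi-\vp)/t$ large and $(\pi-\vp)/t$ bounded. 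This direct computation (Lemmas~3.1--3.4 and Section~4) is the technical heart of the argument and is precisely what produces the factor $(t+\pi-\vp)^{-N}$ with the sharp Gaussian exponent; your proposal does not supply a substitute for it.

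For the remaining dimensions you propose a base case $d=2$ from a ``closed-ish / Mehler-type formula'' together with an integral ascent $K_t^{d+1} = c_d\int K_t^d(\psi(\vp,s))\,w_d(s)\,ds$. Neither ingredient is available in the form you need: no tractable closed expression for $K_t^2$ is known (this is part of why the problem was open), and the $d\to d+1$ product formula you wrote is only schematic. The paper's route is different: it uses a Jacobi-type reduction expressing $K_t^d(\vp)$ as an integral of $K_{t/4}^{2d-1}$ over a one-dimensional set. Since $2d-1$ is odd, this reduces every even dimension to the odd-dimensional result already obtained. Your Laplace-method sketch for the integral step is in the right spirit, and once it is fed the correct reduction formula and the odd-dimensional bounds, that part does go through essentially as in Section~5; but as written your induction has no valid starting point beyond $d=1$ and no verified step.
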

Analogous sharp bounds of $K_t^d(\vp)$ for large $t$ are well known; one has
\begin{equation*}
c \le K_t^{d}(\vp) \le C, \qquad \vp \in [0,\pi], \quad t \ge T,
\end{equation*}
for any fixed $T>0$. This is also a consequence of our estimates for small $t$ together with the semigroup property.

Theorem \ref{thm:main} leads to sharp bounds for the derivative $\partial_{\vp}K_t^d(\vp)$.
We have the following result which, in particular, confirms the intuitively obvious fact that $K_t^d$ is strictly
decreasing in $[0,\pi]$.
\begin{bigcor} \label{cor:main2}
Let $d \ge 1$ and $T>0$ be fixed. There exist constants $c,C>0$ depending only on $d$ and $T$ such that
for $\vp\in [0,\pi]$ and $0<t\le T$
\begin{equation*}
\frac{c \,\vp(\pi-\vp)}{(t + \pi - \vp)^{(d+1)/2} t^{d/2+1}} \exp\bigg({-\frac{\vp^2}{4t}}\bigg) \le
-\partial_{\vp}K_t^d (\vp) \le \frac{C \,\vp(\pi-\vp)}{(t + \pi - \vp)^{(d+1)/2} t^{d/2+1}} \exp\bigg({-\frac{\vp^2}{4t}}\bigg),
\end{equation*}
while for $t \ge T$
$$
c\, e^{-td} \,\vp (\pi-\vp) \le - \partial_{\vp}K_t^d(\vp) \le C\, e^{-td} \vp (\pi-\vp).
$$
\end{bigcor}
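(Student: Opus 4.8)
\textbf{Plan of proof of Corollary~\ref{cor:main2}.} The whole statement will be deduced from Theorem~\ref{thm:main} via the classical observation that differentiating the radial heat kernel in the distance variable raises the dimension by two. Set $u(t,\vp)=K_t^d(\vp)$; then $u$ solves the radial heat equation $\partial_t u=\partial_\vp^2 u+(d-1)\cot\vp\,\partial_\vp u$ on $(0,\pi)$. Writing $\partial_\vp u=-e^{-dt}\sin\vp\,w$ and differentiating this equation in $\vp$, a direct computation (using $\partial_\vp(\cot\vp\,\partial_\vp u)=-\csc^2\!\vp\,\partial_\vp u+\cot\vp\,\partial_\vp^2 u$) shows that $w=w(t,\vp)$ satisfies the radial heat equation in dimension $d+2$,
\begin{equation*}
\partial_t w=\partial_\vp^2 w+(d+1)\cot\vp\,\partial_\vp w,\qquad \vp\in(0,\pi),\ t>0.
\end{equation*}
Since $\mathcal K_t^d$ is smooth on $S^d\times S^d$, the function $K_t^d$ extends smoothly and evenly across each pole $\vp=0,\pi$, so $\partial_\vp K_t^d$ vanishes there and $\partial_\vp u/\sin\vp$ is smooth up to the endpoints; hence $w$ is a genuine smooth solution on $[0,\pi]$ satisfying the natural Neumann conditions at both ends.

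To pin down $w$ I would use uniqueness for this degenerate parabolic problem. Integrating by parts, $\int_0^\pi w(t,\vp)\,\phi(\vp)\sin^{d+1}\!\vp\,d\vp=e^{dt}\int_0^\pi K_t^d(\vp)\sin^{d-1}\!\vp\,\big(\phi'(\vp)\sin\vp+d\,\phi(\vp)\cos\vp\big)\,d\vp$, and letting $t\to0^+$, while using that the mass of $K_t^d$ (against the natural measure $\sin^{d-1}\!\vp\,d\vp$) concentrates at the pole — which follows from the Gaussian factor together with the factor $(t+\pi-\vp)^{-(d-1)/2}$ in Theorem~\ref{thm:main}, in particular ruling out any contribution from $\vp=\pi$ — one gets that $w(t,\cdot)\,\sin^{d+1}\!\vp\,d\vp$ converges weakly to a positive multiple of the Dirac mass at $\vp=0$. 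By uniqueness of the bounded solution of the $(d+2)$-dimensional radial heat problem with such initial data (equivalently, by the maximum principle / contractivity of the heat semigroup on the compact space $S^{d+2}$), this forces
\begin{equation*}
-\partial_\vp K_t^d(\vp)=\kappa(d)\,e^{-dt}\,\sin\vp\,K_t^{d+2}(\vp),\qquad \vp\in[0,\pi],\ t>0,
\end{equation*}
for some constant $\kappa(d)>0$. The same identity can be obtained purely spectrally by differentiating the ultraspherical expansion $K_t^d(\vp)=\sum_{\ell\ge0}e^{-t\ell(\ell+d-1)}a_\ell^d\,C_\ell^{(d-1)/2}(\cos\vp)$ term by term, using $\tfrac{d}{dx}C_\ell^{\nu}(x)=2\nu\,C_{\ell-1}^{\nu+1}(x)$ and the eigenvalue identity $\ell(\ell+d-1)=(\ell-1)(\ell+d)+d$ (with the standard Chebyshev substitute for $C_\ell^{0}$ when $d=1$), which is legitimate by the rapid decay of the coefficients. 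I expect \emph{establishing this identity rigorously} — by either route — to be the only non-routine point; once it is available the corollary follows by substitution.

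Indeed, for $0<t\le T$ apply Theorem~\ref{thm:main} in dimension $d+2$ (legitimate since $d+2\ge3$, with constants depending only on $d$ and $T$) to get
\begin{equation*}
K_t^{d+2}(\vp)\asymp\frac{1}{(t+\pi-\vp)^{(d+1)/2}\,t^{(d+2)/2}}\,\exp\!\Big({-}\frac{\vp^2}{4t}\Big),
\end{equation*}
and combine this with $e^{-dt}\asymp1$ and $\sin\vp\asymp\vp(\pi-\vp)$ on $[0,\pi]$ (all implied constants absolute or depending only on $d,T$); plugging into the identity yields exactly the claimed two-sided bound for $-\partial_\vp K_t^d(\vp)$ on this range, and the positivity of the right-hand side on $(0,\pi)$ gives the asserted strict monotonicity of $K_t^d$ on $[0,\pi]$. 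For $t\ge T$ use instead $K_t^{d+2}(\vp)\asymp1$ — the large-time estimate recorded right after Theorem~\ref{thm:main}, itself a consequence of the small-time bounds and the semigroup property — so that $-\partial_\vp K_t^d(\vp)\asymp e^{-dt}\sin\vp\asymp e^{-dt}\,\vp(\pi-\vp)$, which is the second assertion of the corollary. \qed
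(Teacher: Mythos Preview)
Your proposal is correct and follows essentially the same route as the paper: the key identity $-\partial_\vp K_t^d(\vp)=2\pi\,e^{-dt}\sin\vp\,K_t^{d+2}(\vp)$ is exactly the recurrence \eqref{rec_main}, which the paper simply quotes from \cite{BGL} (derived there by the term-by-term differentiation you also mention), and then both proofs plug in Theorem~\ref{thm:main} for dimension $d+2$ together with $\sin\vp\simeq\vp(\pi-\vp)$ and the large-time bound $K_t^{d+2}\simeq 1$. The PDE/uniqueness derivation you outline is an unnecessary detour given that the spectral argument already suffices, but it leads to the same place.
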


The spherical heat kernel is an important object in analysis, probability and physics, among other fields.
It is the integral kernel of the spherical heat semigroup and thus provides solutions to the heat equation based on the
Laplace-Beltrami operator on $S^d$. It is also a transition probability density of the spherical Brownian motion.
Clearly, these two facts lead to physical significance and applications.

Surprisingly enough, up to our best knowledge an exact global description of the decisive exponential behavior
of $K_t^d(\vp)$ for small $t$ has not been established before, except for the simple case $d=1$ in which the kernel
is just a periodization of the Gauss-Weierstrass kernel.
The main reason and obstacle seems to be the geometry of the sphere that has to be taken into account,
but technically is not easy to handle.
Indeed, intuitively it is clear that the behavior of $K_t^{d}(\vp)$ is very different for small $\vp$, where the sphere
resembles $\R^{d}$, and close to the antipodal point $\vp = \pi$, to which, roughly speaking, the heat can flow
along many geodesic lines.

The most precise global bounds for $K_t^d(\vp)$ known so far are only qualitatively sharp.
By this we mean that the number $4$
in the exponential factors in Theorem~\ref{thm:main} is replaced by some smaller and larger numbers in the lower and
upper bounds, respectively; see e.g.\ Theorems 5.5.6 and 5.6.1 in \cite{Da}.
A sharp estimate for the antipodal point $K_t^{d}(\pi)$ was found by Molchanov, see \cite[Example 3.1]{M}.
For dimensions $d=2,3$ some partial results in the spirit of Theorem \ref{thm:main}, in particular the upper bound,
were obtained by Andersson~\cite{And}.
In this context it is perhaps interesting to note that Nagase \cite[Theorem 1.1]{N}
found a very precise description of the asymptotic behavior of $K_t^d(\vp)$ as $t \to 0$, for small values of $\vp$.

In contrast to qualitatively sharp estimates, genuinely sharp heat kernel bounds are usually much harder to prove and
appear rarely in the literature; heat kernel estimates on the hyperbolic space \cite{DaMa} is one of these sparse instances.
The example of $S^d$ shows that this is a difficult problem even for basic and regular Riemannian manifolds.
In this connection, it is perhaps worth mentioning the recent papers
\cite{BM1,BM2,MS,MSZ} where such results were obtained for Dirichlet heat kernels related to Bessel operators in half-lines,
the Dirichlet heat kernel in Euclidean balls of arbitrary dimension, and the Fourier-Bessel heat kernel on the interval $(0,1)$.
This was achieved by a clever combination of probabilistic and analytic methods.

An interesting aspect of Theorem \ref{thm:main} is its relation with sharp estimates for the ultraspherical, or more generally,
the Jacobi heat kernel $G_t^{\alpha,\beta}(x,y)$; see e.g.\ \cite{NoSj}. Qualitatively sharp estimates for the Jacobi heat kernel
were obtained independently in \cite{CKP} and \cite{NoSj}. Combining our Theorem \ref{thm:main} with the reduction formula
derived in \cite{NoSj}, one can prove genuinely sharp bounds for $G_t^{\alpha,\beta}(x,y)$, assuming that
$\alpha,\beta \ge -1/2$ and $\alpha+\beta$ is a dyadic number. This leads to the natural conjecture that
\cite[Theorem A]{NoSj} holds with $c_1=c_2=1/4$, that is exactly the same constants in the exponential factors as in
Theorem~\ref{thm:main}, and this for all $\alpha,\beta > -1$.

\vspace{0,5cm}
\section{Outline of the proof}

The spherical heat kernel $\mathcal{K}_t^d(\xi,\eta)$ and the associated kernel $K_t^d(\vp)$
can be expressed explicitly as series involving spherical harmonics or ultraspherical polynomials, respectively.
But these series oscillate heavily and in general cannot be computed, so they are of no use for our purposes.
Thus our approach is less direct.

In the first step, we prove Theorem \ref{thm:main} for odd dimensions $d=1,3,5,\ldots$. This is done by exploiting in
an elementary, though technically involved, way the recurrence relation
\begin{equation} \label{rec_main}
K_t^{d+2} (\vp) 
=
 - \frac{e^{td}}{2\pi} 
(\sv)^{-1}
\partial_{\vp} K_t^{d}(\vp), 
\qquad d \ge 1, 
\end{equation}
together with the well-known fact that $K_t^1(\vp)$ is given by a simple positive series
\begin{equation} \label{3theta}
K_t^1 (\vp) 
=
\vt_t(\vp) :=
\sum_{n \in \Z} W_t(\vp + 2\pi n).
\end{equation}
Here $W_t$ is the one-dimensional Gauss-Weierstrass kernel
$$
W_t(x)  = \frac{1}{\sqrt{4 \pi t}} e^{-x^2/(4t)}.
$$
It is worth mentioning that $\vt_t(\vp)$ can be expressed in terms of $\theta_3$, one of the celebrated Jacobi theta functions.
Notice that \eqref{rec_main} readily implies Corollary \ref{cor:main2} once Theorem \ref{thm:main} is proved.

The formula \eqref{rec_main} is a special case of a more general relation satisfied by the ultraspherical
heat kernel
\begin{equation} \label{rec_Jac}
\partial_x G_t^{\alpha,\alpha}(x,1) = 2 (\alpha+1) e^{-t(2\alpha+2)} G_t^{\alpha+1,\alpha+1}(x,1), \qquad \alpha > -1,
\end{equation}
since
\begin{equation} \label{rel_Jac}
K_t^d(\vp) = \frac{1}{\sigma_{d-1}(S^{d-1})} G_t^{d/2-1,d/2-1}(\cos\vp,1), \qquad d \ge 1,
\end{equation}
where $\sigma_0(S^0)=2$ in case $d=1$. The identity \eqref{rec_Jac} follows by a straightforward
differentiation of the series expressing $G_t^{\alpha,\alpha}(x,1)$ in terms of ultraspherical polynomials.
Both \eqref{rec_Jac} and \eqref{rec_main} can be found e.g.\ in \cite[(2.7.13)]{BGL} and \cite[(2.7.15)]{BGL}, respectively,
but they were no doubt known earlier, at least as folklore.

In the second step we use the result for odd dimensions to cover all even dimensions $d=2,4,6,\ldots$.
This is performed by employing a reduction formula for the Jacobi heat kernel \cite[Theorem 3.1]{NoSj} that in our situation
implies via \eqref{rel_Jac}
\begin{equation} \label{reduc1}
K_t^d(\vp) = c_d \int_{-1}^1 K_{t/4}^{2d-1}\bigg(\arccos\Big(v\cos\frac{\vp}2\Big)\bigg) \big(1-v^2\big)^{(d-3)/2}\, dv,
	\qquad d \ge 2,
\end{equation}
with $c_d = 2^{-d+1} \pi^{(d-1)/2}/\Gamma((d-1)/2)$.
It is worth noting that this step of the proof can be generalized to deliver
an analogue of Theorem \ref{thm:main} in the Jacobi setting, 
as mentioned in Section~\ref{sec:res}.

Summing up, we split the proof of Theorem \ref{thm:main} into the following two results.
\begin{thm} \label{thm:step1}
Given $N \ge 0$,
the estimate of Theorem \ref{thm:main} holds for $d=2N+1$. 
\end{thm}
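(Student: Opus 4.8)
The plan is to induct on $N$, with the recurrence \eqref{rec_main} and the base case \eqref{3theta} as the engine. First, the estimate is elementary when $t$ is bounded away from $0$: for $t\ge T_0$ both $K_t^{2N+1}(\vp)$ and the claimed right‑hand side are comparable to positive constants (the former by positivity and continuity of the heat kernel on the compact set $[0,\pi]\times[T_0,T]$, cf.\ the large‑time bounds mentioned after Theorem~\ref{thm:main}), so we may assume $0<t\le T_0$ with $T_0$ as small as convenient. Iterating \eqref{rec_main} and using $1+3+\cdots+(2N-1)=N^{2}$,
\[
K_t^{2N+1}(\vp)=\Big(-\tfrac{1}{2\pi}\Big)^{N} e^{tN^{2}}\,D^{N}\vt_t(\vp),\qquad D:=(\sin\vp)^{-1}\partial_\vp .
\]
Since $\sin$ has period $2\pi$, $D$ sends the shifted Gaussian $W_t(\cdot+2\pi n)$ to the shift of $\widetilde{D}W_t$, where $\widetilde{D}=(\sin u)^{-1}\,d/du$, whence
\[
K_t^{2N+1}(\vp)=\Big(-\tfrac{1}{2\pi}\Big)^{N} e^{tN^{2}}\sum_{n\in\Z}\big(\widetilde{D}^{\,N}W_t\big)(\vp+2\pi n),
\]
and, $e^{tN^{2}}$ being bounded, it remains to estimate this series. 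For $N=0$ this is the classical bound $\vt_t(\vp)\asymp t^{-1/2}e^{-\vp^{2}/4t}$ on $[0,\pi]$: the $n=0$ term gives the lower bound, and the upper bound follows from $(\vp+2\pi n)^{2}\ge\vp^{2}$ for all $n\in\Z$ and $\vp\in[0,\pi]$, together with $(\vp+2\pi n)^{2}\ge\vp^{2}+4\pi^{2}|n|$ for $n\ne0,-1$ and $W_t(\vp-2\pi)\le W_t(\vp)$ for the remaining term.

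For $N\ge1$, a direct induction gives a structural formula: $\widetilde{D}^{\,N}W_t(u)$ is $W_t(u)$ times a linear combination of expressions $t^{-j}(\sin u)^{-k}\cdot(\text{polynomial in }u,\cos u)$ with $1\le j\le N$ and $k\le 2N-1$, whose leading part (largest power of $1/t$) is $\big(\!-u/(2t\sin u)\big)^{N}W_t(u)$. In particular $\widetilde{D}^{\,N}W_t$ is smooth away from $\pi\Z$ but blows up at every multiple of $\pi$, so each image $\widetilde{D}^{\,N}W_t(\vp+2\pi n)$ is singular as $\vp\to0$ and as $\vp\to\pi$; the full sum $D^{N}\vt_t$ is nonetheless smooth, because $\vt_t$ is smooth and even about both $0$ and $\pi$. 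We shall also use the elementary fact that $D^{N}$, applied to a function that is smooth and even about an endpoint of $[0,\pi]$, is bounded near that endpoint by a finite number of sup‑norms of the function's derivatives. Since for $t\le T_0$ all the Gaussian images except the one(s) nearest to a given endpoint contribute a sum that is exponentially small together with all its derivatives, so that applying $D^{N}$ leaves it negligible, we split $[0,\pi]$ into $[0,2\pi/3]$ and $[2\pi/3,\pi]$.

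On $[0,2\pi/3]$ write $\vt_t=W_t+\widetilde{R}_t$, $\widetilde{R}_t(\vp)=\sum_{n\ne0}W_t(\vp+2\pi n)$; both summands are even about $0$, the images in $\widetilde{R}_t$ satisfy $|\vp+2\pi n|\ge4\pi/3$, so $\widetilde{R}_t$ is exponentially small with all derivatives and $D^{N}\widetilde{R}_t$ is negligible. The structural formula and $\vp/\sin\vp\asymp1$ on $[0,2\pi/3]$ give $|D^{N}W_t(\vp)|\asymp t^{-N}W_t(\vp)$, hence $K_t^{2N+1}(\vp)\asymp t^{-N-1/2}e^{-\vp^{2}/4t}$, which is the asserted bound since $t+\pi-\vp\asymp1$ there. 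The region $[2\pi/3,\pi]$ is the heart of the matter. Writing $\epsilon=\pi-\vp\in[0,\pi/3]$ we split $\vt_t(\vp)=C_t(\epsilon)+R_t(\vp)$ with
\[
C_t(\epsilon)=W_t(\pi-\epsilon)+W_t(\pi+\epsilon)=2(4\pi t)^{-1/2}e^{-(\pi^{2}+\epsilon^{2})/4t}\cosh\tfrac{\pi\epsilon}{2t},\qquad R_t(\vp)=\sum_{n\ne0,-1}W_t(\vp+2\pi n).
\]
Both $C_t$ and $R_t(\pi-\cdot)$ are even in $\epsilon$; in the variable $\epsilon$ one has $D=-(\sin\epsilon)^{-1}\partial_\epsilon$, which preserves evenness; the images in $R_t$ satisfy $|\vp+2\pi n|\ge5\pi/3$, so $R_t$ is negligible after $N$ applications of $D$. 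Thus the problem reduces to showing
\[
\Big(\tfrac{1}{2\pi}\Big)^{N}e^{tN^{2}}\big[(\sin\epsilon)^{-1}\partial_\epsilon\big]^{N}C_t(\epsilon)\ \asymp\ \frac{1}{(t+\epsilon)^{N}\,t^{N+1/2}}\;e^{-\vp^{2}/4t},
\]
with a \emph{positive} comparison constant.

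I would prove this last comparison by inducting on $N$: one checks that $\big[(\sin\epsilon)^{-1}\partial_\epsilon\big]^{N}C_t(\epsilon)$ equals $(4\pi t)^{-1/2}e^{-(\pi^{2}+\epsilon^{2})/4t}$ times a sum of terms $t^{-a}(\sin\epsilon)^{-b}\cdot(\text{polynomial in }\epsilon)\times\{\cosh\tfrac{\pi\epsilon}{2t}\text{ or }\sinh\tfrac{\pi\epsilon}{2t}\}$, each individually finite at $\epsilon=0$ by evenness, and that after carrying out the cancellations among them the whole expression acquires a definite sign and size $\asymp t^{-N}(t+\epsilon)^{-N}\cosh\tfrac{\pi\epsilon}{2t}$. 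The inequalities driving this are $\cosh\tfrac{\pi\epsilon}{2t}\asymp e^{\pi\epsilon/2t}$ and $\pi\sinh\tfrac{\pi\epsilon}{2t}-\epsilon\cosh\tfrac{\pi\epsilon}{2t}\asymp\tfrac{\epsilon}{t+\epsilon}\,e^{\pi\epsilon/2t}$, together with their analogues for the hyperbolic combinations produced at each differentiation; combined with $(4\pi t)^{-1/2}e^{-(\pi^{2}+\epsilon^{2})/4t}\cosh\tfrac{\pi\epsilon}{2t}\asymp t^{-1/2}e^{-(\pi-\epsilon)^{2}/4t}=t^{-1/2}e^{-\vp^{2}/4t}$ they turn the structural formula into the stated bound. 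The main obstacle is exactly this bookkeeping near $\vp=\pi$: one must keep every term finite at the pole, extract from the cancellations a contribution of the precise size $(t+\epsilon)^{-N}t^{-N-1/2}$ with a definite sign, and control the error so that the two‑sided estimate holds with the sharp exponential constant $1/4$ and the correct polynomial prefactor. This is where the geometry of the antipodal point makes its presence felt and where the ``technically involved'' part of the argument is concentrated.
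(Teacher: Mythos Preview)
Your overall plan coincides with the paper's: iterate \eqref{rec_main} to reduce to
\[
K_t^{2N+1}(\vp)=\Big(\tfrac{1}{2\pi}\Big)^N e^{tN^2}\,(-D)^N\vt_t(\vp),
\]
identify the dominant Gaussian image(s), and show that the tail of the $\vt_t$-series is negligible. Where your proposal and the paper part ways is in the organization of the hard step, and this is exactly where your proof is left incomplete.

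You split the interval by the value of $\vp$ (into $[0,2\pi/3]$ and $[2\pi/3,\pi]$) and propose to treat the antipodal piece via $C_t(\epsilon)=W_t(\pi-\epsilon)+W_t(\pi+\epsilon)$ uniformly, establishing by induction on $N$ that $[(\sin\epsilon)^{-1}\partial_\epsilon]^N C_t(\epsilon)$ has a definite sign and size $\asymp t^{-N}(t+\epsilon)^{-N}\cosh(\pi\epsilon/2t)$ times the Gaussian. You yourself flag this as the ``main obstacle'', and indeed the proposal stops short of carrying out the cancellation-tracking among the terms $t^{-a}(\sin\epsilon)^{-b}\,P(\epsilon)\cdot\{\cosh,\sinh\}$. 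Two remarks: the individual terms in that expansion are \emph{not} separately finite at $\epsilon=0$ (the poles of $(\sin\epsilon)^{-b}$ cancel only in the full sum), so the bookkeeping is genuinely delicate; and your inductive hypothesis is an asymptotic relation rather than an identity, so a direct induction on $N$ does not immediately propagate it.

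The paper avoids this difficulty by splitting according to whether $(\pi-\vp)/t$ is large or small, not according to $\vp$ alone. In the ``large'' regime one works with $W_t$ by itself; the key device is a structural lemma expressing $D^N(F(vz))=\sum_{j=1}^N v^{2j}(L^jF)(vz)\,\Phi_{N,j}(z)$ with $L=z^{-1}d/dz$, $\Phi_{N,N}(z)=(z/\sin z)^N$, and $\Phi_{N,j}$ having poles of controlled order at $\pi\Z\setminus\{0\}$. Specializing to $F=W_1$ (so $L^jW_1=(-1/2)^jW_1$) gives a closed form for $D^NW_t(\vp)$ in which the $j=N$ term visibly dominates when $(\pi-\vp)/t$ is large, yielding the desired sign and size with no cancellations to track. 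In the ``small'' regime one does use the pair $C_t$, but now the problem is localized to $\psi=\pi-\vp$ with $\psi/t$ bounded; applying Leibniz to $D^N(W_t(\psi)\cdot\cosh(\pi\psi/2t))$ and the same structural lemma to $D^k_\psi(\cosh(v\psi))$, one sees that the term with all $D$'s on $\cosh$ dominates for small $t$, again giving the sign and size directly. The tail of $\vt_t$ is handled not via your ``sup-norms of derivatives of an even function'' shortcut (which, as written, needs justification near the pole of $1/\sin\vp$), but by pairing $W_t(\vp-2\pi n)+W_t(\vp+2\pi n)=2e^{-\pi^2n^2/t}W_t(\vp)\cosh(\pi n\vp/t)$ and estimating with the same tools.

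In short, your strategy is sound and close to the paper's, but the decisive idea you are missing is the split by the scale-invariant quantity $(\pi-\vp)/t$ together with the structural lemma for $D^N$; these replace the cancellation-chasing you anticipate with a single ``dominant term'' argument in each regime.
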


\begin{thm} \label{thm:step2}
Let $N \ge 2$.
If the estimate of Theorem \ref{thm:main} holds in dimension $d=2N-1$, then it also holds in dimension $d=N$.
\end{thm}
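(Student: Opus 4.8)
The plan is to insert the assumed estimate for $K^{2N-1}$ into the reduction formula \eqref{reduc1} with $d=N$ and then perform a Laplace-type analysis of the resulting one-dimensional integral, uniformly in $\vp\in[0,\pi]$ and $0<t\le T$. Writing $\psi=\psi_\vp(v):=\arccos\!\big(v\cos\tfrac\vp2\big)$, formula \eqref{reduc1} reads $K^N_t(\vp)=c_N\int_{-1}^1 K^{2N-1}_{t/4}(\psi)\,(1-v^2)^{(N-3)/2}\,dv$, and the density $(1-v^2)^{(N-3)/2}$ is integrable on $(-1,1)$ for every $N\ge2$. Since $0<t/4\le T/4$, the hypothesis may be applied to $K^{2N-1}_{t/4}$; recalling that in dimension $2N-1$ the exponents appearing in Theorem~\ref{thm:main} are $(2N-2)/2=N-1$ and $(2N-1)/2=N-\tfrac12$, and that $t/4\asymp t$, this reduces the assertion to the equivalence
\[
\int_{-1}^1 \frac{(1-v^2)^{(N-3)/2}}{(t+\pi-\psi)^{N-1}}\,e^{-\psi^2/t}\,dv
\;\asymp\;
\Big(\frac{t}{\,t+\pi-\vp\,}\Big)^{(N-1)/2}e^{-\vp^2/(4t)},
\]
with constants depending only on $N$ and $T$.

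I would first record some elementary geometric facts about $\psi_\vp$. The map $v\mapsto\psi$ is strictly decreasing, with $\psi=\vp/2$ at $v=1$ and $\psi=\pi-\vp/2$ at $v=-1$; thus $\psi$ ranges over $[\vp/2,\pi-\vp/2]$, and $e^{-\psi^2/t}$ attains its maximum, equal to $e^{-\vp^2/(4t)}$, precisely at $v=1$ --- which is where the Gaussian factor with the sharp constant $4$ comes from. The decisive quantitative input is the two-sided bound
\[
\psi_\vp(v)^2-\frac{\vp^2}{4}\;\asymp\;(\pi-\vp)(1-v),\qquad v\in[-1,1],\ \ \vp\in[0,\pi],
\]
which I would establish by elementary estimates for $\arccos$ near both endpoints; note that $\frac{d}{dv}\psi^2\big|_{v=1}=-\vp\cot\tfrac\vp2$ and that $\vp\cot\tfrac\vp2\asymp\pi-\vp$ uniformly. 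One also needs $\pi-\psi\asymp1$ whenever $v\ge\tfrac12$ (indeed $\pi-\psi\ge\pi/2$ there), together with a matching lower bound for $\pi-\psi$ near $v=-1$, where $\pi-\psi$ is comparable to $\vp/2$.

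With these in hand the analysis is a localization at $v=1$. Substituting $r=1-v$, the contribution of the set $\{1-v\le\rho\}$ is comparable to $e^{-\vp^2/(4t)}\int_0^\rho e^{-br}r^{(N-3)/2}\,dr$ with $b\asymp(\pi-\vp)/t$, and for $\rho\asymp\min\{1,t/(\pi-\vp)\}$ this model integral is $\asymp(1+b)^{-(N-1)/2}\asymp\big(t/(t+\pi-\vp)\big)^{(N-1)/2}$, exactly the right-hand side above. For the lower bound I would restrict the integral to $\{1-v\le\rho\}$ with $\rho=\min\{\tfrac12,\kappa t/(\pi-\vp)\}$ and $\kappa$ small, so that there $e^{-\psi^2/t}\asymp e^{-\vp^2/(4t)}$ and $\pi-\psi\asymp1$, and then merely integrate the density. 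For the upper bound I would split $[-1,1]$ into this neighbourhood of $v=1$, an intermediate region on which $\pi-\psi\asymp1$ and $e^{-\psi^2/t}$ is already much smaller than its maximum so that the contribution is negligible, and a neighbourhood of $v=-1$; on the latter $(t+\pi-\psi)^{-(N-1)}$ may blow up like $(t+\vp)^{-(N-1)}$, but there $\psi$ is close to $\pi-\vp/2$ and the identity $(\pi-\vp/2)^2-(\vp/2)^2=\pi(\pi-\vp)$ yields a gain $e^{-c\pi(\pi-\vp)/t}$; for small $\vp$ this is $e^{-c'/t}$, which defeats any negative power of $t$, so the piece is negligible. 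The range $\pi-\vp\lesssim t$ (in particular $\vp=\pi$) is conveniently treated on its own: there the spread of $\psi^2$ over $v\in[-1,1]$ is $\le\pi(\pi-\vp)\lesssim t$, so $e^{-\psi^2/t}\asymp e^{-\vp^2/(4t)}$ and $\pi-\psi\asymp1$ throughout, whence the whole integral is $\asymp e^{-\vp^2/(4t)}$, which matches the target since $t+\pi-\vp\asymp t$.

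The main obstacle is the uniform two-sided bookkeeping, i.e.\ turning the Laplace heuristic into rigorous estimates valid simultaneously for all relative sizes of $t$, $\vp$, $\pi-\vp$ and $1-v$. The two delicate points are (i) proving the equivalence $\psi_\vp(v)^2-\vp^2/4\asymp(\pi-\vp)(1-v)$, together with the companion bounds for $\pi-\psi$, with constants independent of $\vp$, which calls for a somewhat careful elementary analysis of $\arccos$ at both ends of the interval; and (ii) in the upper bound, quantifying precisely how the Gaussian decay away from $v=1$ absorbs the blow-up of the transplanted weight $(t+\pi-\psi)^{-(N-1)}$ near $v=-1$. The remaining steps amount to routine integration.
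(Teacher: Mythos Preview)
Your approach is correct and follows the same Laplace-method strategy as the paper: plug the assumed $K^{2N-1}$ bound into \eqref{reduc1}, localize at the endpoint where $\psi=\vp/2$ (equivalently $v=1$), and kill the far tail near $\psi=\pi-\vp/2$ via the extra Gaussian gain $e^{-\pi(\pi-\vp)/t}$. The difference is purely in execution. The paper first passes to the $\psi$-variable and then to $\gamma=\psi-\vp/2$, uses the \emph{exact} identity $(\gamma+\vp/2)^2-\vp^2/4=\gamma(\gamma+\vp)$, replaces the trigonometric factors by their polynomial comparands, and then makes the single substitution $s=\gamma(\gamma+\vp)/t$; this collapses the whole integral to $t^{(N-1)/2}\int_0^{(\pi^2-\vp^2)/(4t)} s^{(N-3)/2}e^{-s}\,ds$, an incomplete-gamma integral whose size is read off directly without any case analysis. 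Your route, based on the two-sided comparison $\psi^2-\vp^2/4\asymp(\pi-\vp)(1-v)$ (which is indeed true: write $\cos(\vp/2)-\cos\psi=(1-v)\cos(\vp/2)$, convert the left side to $2\sin\tfrac{\psi+\vp/2}{2}\sin\tfrac{\psi-\vp/2}{2}$, and use $\sin x\asymp x$ on $[0,\pi/2]$ together with $\cos(\vp/2)\asymp\pi-\vp$), is equivalent but requires the regime splitting and tail bookkeeping you describe. Either path works; the paper's substitution simply buys a shorter, case-free argument.
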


The proofs of Theorems \ref{thm:step1} and \ref{thm:step2} are given in Sections \ref{sec:st1} and \ref{sec:st2}.
In both cases it is enough to show the result for some $T$, possibly very small, and one can restrict $\vp$ to
$(0,\pi)$, since $K_t^d(\vp)$ is continuous on the closed interval $[0,\pi]$.
Some preparatory results needed to prove Theorem~\ref{thm:step1} are contained in Section~\ref{sec:auxres}.

\subsection*{Notation}
In what follows
we denote by $\N = \{0,1,\ldots\}$ the set of natural numbers. We write $x \wedge y$ for the minimum of $x$ and $y$.
Further, we will frequently use the notation $X \lesssim Y$ to indicate that
$X \le C Y$ with a positive constant $C$ independent of significant quantities. We shall write
$X \simeq Y$ when simultaneously $X \lesssim Y$ and $Y \lesssim X$.

\vspace{0,5cm}
\section{Technical preparation} \label{sec:auxres}

Define differential operators
\begin{align*}
\D 
=
\frac{1}{\sin z} \frac{\mathrm{d}}{\mathrm{d}z}, \qquad
L 
=
\frac{1}{z} \frac{\mathrm{d}}{\mathrm{d}z}.
\end{align*}
Observe that $\L$ preserves the space of even, entire functions. Let $N \in \N$.
When writing $\D$, we will often need to specify the variable, and for instance
the notation $\D_z^N \big( F(v z) \big)$ will mean that the operator $\D^N$ is applied 
to the function $z \mapsto F(vz)$. On the other hand, we write $L^N F(vx)$ for $L^N F$ evaluated at $vx$.
Denote $E = \pi \Z \setminus \{0\}$.

\begin{lem}\label{lem:A}
Let $F$ be an even, entire function. Then for $N \ge 1$ and $v>0$
\begin{align} \label{for1} 
\D_z^N \big( F(v z) \big) 
=
\sum_{j=1}^N v^{2j} L^j F(vz) \Phi_{N,j} (z), 
\qquad z \in \C \setminus E.
\end{align}
Here the functions $\Phi_{N,j}$, $j=1,\ldots,N$, are even and meromorphic in $\C$ with poles only at the points of $E$,
and these poles are of order at most $2N-j$. Moreover, $\Phi_{N,N} (z) = ( z/\sin z )^N$.  
\end{lem}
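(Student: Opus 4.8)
The plan is to prove \eqref{for1} by induction on $N$, exploiting the commutation between $\D_z$ (acting in $z$) and the operator $L$ (acting at the dilated argument $vz$).

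\textbf{Base case.} For $N=1$, a direct computation gives $\D_z\big(F(vz)\big)=\frac{1}{\sin z}\cdot v F'(vz)=v^2\cdot\frac{F'(vz)}{vz}\cdot\frac{z}{\sin z}=v^2 (LF)(vz)\,\Phi_{1,1}(z)$, where $\Phi_{1,1}(z)=z/\sin z$. This is even, meromorphic with poles exactly at $E$, each simple (order $2\cdot1-1=1$), so the base case holds.

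\textbf{Inductive step.} Assume \eqref{for1} for some $N\ge 1$. Apply $\D_z=\frac{1}{\sin z}\frac{\mathrm d}{\mathrm dz}$ to both sides. On the right, by the Leibniz rule,
\[
\D_z\Big(v^{2j} L^jF(vz)\,\Phi_{N,j}(z)\Big)
= v^{2j}\Big[\D_z\big(L^jF(vz)\big)\,\Phi_{N,j}(z) + L^jF(vz)\,\D_z\Phi_{N,j}(z)\Big].
\]
For the first term, note that $L^jF$ is again even and entire (since $L$ preserves that space, as remarked), so by the $N=1$ case applied to $L^jF$ in place of $F$ we get $\D_z\big(L^jF(vz)\big)=v^2 (L^{j+1}F)(vz)\,(z/\sin z)$. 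Substituting and reindexing, the coefficient of $v^{2k}(L^kF)(vz)$ in $\D_z^{N+1}\big(F(vz)\big)$ becomes
\[
\Phi_{N+1,k}(z) = \Phi_{N,k-1}(z)\,\frac{z}{\sin z} + \D_z\Phi_{N,k}(z),
\qquad k=1,\ldots,N+1,
\]
with the convention $\Phi_{N,0}=\Phi_{N,N+1}=0$. Each $\Phi_{N+1,k}$ is then even (product and $\D_z$-image of even meromorphic functions with poles in $E$) and meromorphic with poles only in $E$. For the pole orders: $\Phi_{N,k-1}\cdot(z/\sin z)$ has order at most $(2N-k+1)+1=2N-k+2=2(N+1)-k$ at each point of $E$; and $\D_z\Phi_{N,k}$ raises the pole order of $\Phi_{N,k}$ by at most $2$ (one from differentiation, one from the factor $1/\sin z$, which has a simple zero at each point of $E$), giving order at most $(2N-k)+2=2(N+1)-k$. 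Hence $\Phi_{N+1,k}$ has poles of order at most $2(N+1)-k$, as required. Finally $\Phi_{N+1,N+1}=\Phi_{N,N}\cdot(z/\sin z)=(z/\sin z)^{N+1}$, since the $\D_z\Phi_{N,N+1}$ term vanishes. The validity of \eqref{for1} as an identity of meromorphic functions on $\C\setminus E$ follows because both sides are meromorphic and agree wherever the manipulations are legitimate.

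\textbf{Main obstacle.} The one genuinely delicate point is the bookkeeping of pole orders in the inductive step, in particular verifying that $\D_z$ raises the pole order by \emph{at most} $2$ rather than exactly $2$; this uses that $1/\sin z$ vanishes simply at each point of $E$, which partially cancels the order-raising effect of differentiation. Everything else is routine Leibniz-rule manipulation together with the stability of even entire functions under $L$.
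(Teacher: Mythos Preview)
Your proof is correct and follows essentially the same inductive approach as the paper: establish the $N=1$ case directly, then apply $\D_z$ and Leibniz' rule to pass from $N$ to $N+1$, tracking the pole orders of the two contributions. One slip: in the inductive step (and again in your ``Main obstacle'' paragraph) you write that $1/\sin z$ has a \emph{simple zero} at each point of $E$, but of course it has a \emph{simple pole} there; the pole-order count (``raises by at most $2$'') is nonetheless correct, and the rest of the argument is sound.
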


\begin{proof}
When $N=1$, \eqref{for1} is obvious, since
$$
\frac{1}{\sin z} \frac{\mathrm{d}}{\mathrm{d}z} \big(F(vz)\big) = v^2 \frac{F'(vz)}{vz} \frac{z}{\sin z}.
$$
For the induction step from $N$ to $N+1$, we apply $D_z$ to term number $j$ in \eqref{for1} and use 
Leibniz' rule and the case $N=1$ to evaluate $D_z( L^j F(vz))$. The result is
\begin{equation} 
D_z\big( v^{2j} L^{j} F(vz) \Phi_{N,j}(z) \big) 
 = v^{2j+2} L^{j+1} F(vz) \frac{z}{\sin z} \Phi_{N,j}(z) + v^{2j} L^{j}F(vz) \frac{1}{\sin z}\Phi'_{N,j}(z).
\label{for9}
\end{equation}
Since $\Phi_{N,j}$ is even and analytic in $\C\setminus E$, so is the function $z \mapsto (\sin z)^{-1}\Phi'_{N,j}(z)$
appearing here, and its poles are of order at most $2+2N-j = 2(N+1)-j$. This means that the second term on the right-hand
side of \eqref{for9} fits in the sum in \eqref{for1} with $N$ replaced by $N+1$. The same is true for the first term
on the right-hand side of \eqref{for9}, since $z \mapsto z (\sin z)^{-1}\Phi_{N,j}(z)$ has poles of order at most
$1+ 2N-j = 2(N+1)-(j+1)$ in $E$.
This completes the induction step, because it is easy to verify, also by induction, that
$\Phi_{N,N}(z) = (z/\sin z)^N$. Lemma \ref{lem:A} is proved.
\end{proof}

\begin{lem}\label{lem:F4rr}
Let $j \ge 1$ and $M_0>0$ be fixed. Then
\begin{itemize}
\item[(a)] 
$\L^j(\cosh) (z) \simeq 1$ uniformly in $z \in (0,M_0]$ and
\item[(b)]
$| \L^j(\cosh) (z) | \lesssim e^z$ for $z > 0$.
\end{itemize}
\end{lem}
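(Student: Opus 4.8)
The plan is to compute the Taylor expansion of $\L^j(\cosh)$ explicitly and to read off both estimates from it. Since $\cosh$ is even and entire and $\L$ preserves the space of even entire functions, $\L^j(\cosh)$ is again even and entire, hence equals the sum of its Taylor series on all of $\C$. Because $\L(z^{2n}) = 2n\,z^{2n-2}$, iterating gives $\L^j(z^{2n}) = 2^j\,\frac{n!}{(n-j)!}\,z^{2n-2j}$ for $n\ge j$ and $\L^j(z^{2n}) = 0$ for $0\le n<j$. Applying this termwise to $\cosh z = \sum_{n\ge0} z^{2n}/(2n)!$ and substituting $m=n-j$, I get
\begin{equation*}
\L^j(\cosh)(z) = \sum_{m\ge 0} \frac{2^j\,(m+j)!}{m!\,(2m+2j)!}\, z^{2m}, \qquad z\in\C,
\end{equation*}
a power series in $z^2$ with strictly positive coefficients.

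Part (a) then follows immediately: a power series in $z^2$ with nonnegative coefficients and positive constant term is strictly positive and nondecreasing on $[0,\infty)$, and it is continuous; hence on $(0,M_0]$ its values lie between the two positive finite constants $\L^j(\cosh)(0) = 2^j j!/(2j)!$ and $\L^j(\cosh)(M_0)$, which is precisely $\L^j(\cosh)(z)\simeq 1$ uniformly there.

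For part (b) I would compare the series above termwise with that of $\cosh z = \sum_{m\ge0} z^{2m}/(2m)!$. The quotient of the $m$-th coefficients is
\begin{equation*}
\frac{2^j\,(m+j)!\,(2m)!}{m!\,(2m+2j)!}
= \frac{2^j\,(m+1)(m+2)\cdots(m+j)}{(2m+1)(2m+2)\cdots(2m+2j)}
= \frac{1}{(2m+1)(2m+3)\cdots(2m+2j-1)} \le 1,
\end{equation*}
where in the middle step the $j$ even factors $2m+2i = 2(m+i)$, $i=1,\dots,j$, of the denominator cancel the numerator, leaving only the odd factors, each $\ge 1$. Consequently $\L^j(\cosh)(z) \le \cosh z \le e^z$ for all $z\ge 0$, which is (b) with constant $1$.

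I do not expect any genuine obstacle; the one point needing a little care is producing a clean \emph{uniform} bound valid for all $z>0$, and the termwise domination by $\cosh$ achieves this effortlessly. A more computational alternative — writing $\L^j(\cosh)(z) = \big(P_j(z)\cosh z + Q_j(z)\sinh z\big)/z^{2j-1}$ for suitable polynomials $P_j,Q_j$ and estimating directly — would require tracking the degrees of $P_j,Q_j$ and the cancellation at $z=0$, and is best avoided.
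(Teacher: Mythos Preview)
Your proof is correct. For part~(a) it is essentially the paper's argument made explicit: both observe that $L^j(\cosh)$ has a Taylor series in $z^2$ with strictly positive coefficients, so it is bounded below by its positive constant term and above (on any bounded interval) by continuity.

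For part~(b) the routes diverge. The paper first disposes of small $z$ via~(a), and for $z\ge 1$ argues by induction on $j$ that $L^j(\cosh)(z)$ is a finite linear combination of terms $z^{-m}\sinh z$ and $z^{-m}\cosh z$ with $m\ge 1$, each of which is $\lesssim e^z$. Your termwise domination by the Taylor series of $\cosh$ is cleaner: it yields the global inequality $L^j(\cosh)(z)\le\cosh z\le e^z$ on all of $[0,\infty)$ with explicit constant~$1$, and avoids both the case split and the induction. Amusingly, the ``more computational alternative'' you dismiss at the end --- writing $L^j(\cosh)$ as a combination of $\cosh$ and $\sinh$ divided by powers of $z$ --- is precisely the route the paper takes.
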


\begin{proof} 
Since $\cosh z = \sum_{k=0}^{\8} a_k z^{2k}$ with $a_k > 0$,
we see that $\L^j(\cosh) (z)$ will be of the same form, with coefficients $a_{k,j} > 0$.
Thus $\L^j(\cosh) (z) \ge a_{0,j} > 0$, $z > 0$, and this implies item (a).

To show (b), we may assume that $z \ge 1$ because of (a). By induction $\L^j(\cosh) (z)$ can be seen to be a finite
linear combination of terms $z^{-m}\sinh z$ and $z^{-m}\cosh z$ with $m \ge 1$.
So $|\L^j(\cosh) (z) | \lesssim e^z$ for $z \ge 1$.
\end{proof}

\begin{lem} \label{lem:2}
Let $k \in \N$ be fixed. Then, for any $M_0 > 0$ there exists $v_0 > 0$ such that 
\begin{align*} 
\D_z^k \big( \cosh (v z) \big) 
\simeq
v^{2k},
\end{align*}
uniformly in $z \in (0,\pi/2]$ and $v \ge v_0$ satisfying $vz \le M_0$.
\end{lem}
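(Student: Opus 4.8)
The plan is to apply Lemma~\ref{lem:A} with $F = \cosh$, which is even and entire, to expand $\D_z^k(\cosh(vz))$ and then identify which term dominates for large $v$. By \eqref{for1} we have
\[
\D_z^k\big(\cosh(vz)\big) = \sum_{j=1}^k v^{2j}\, \L^j(\cosh)(vz)\, \Phi_{k,j}(z).
\]
On the compact interval $z \in (0,\pi/2]$ the meromorphic functions $\Phi_{k,j}$ stay away from their poles (which lie in $E = \pi\Z\setminus\{0\}$), so each $\Phi_{k,j}$ is bounded there; moreover the top-order coefficient $\Phi_{k,k}(z) = (z/\sin z)^k$ satisfies $\Phi_{k,k}(z) \simeq 1$ on $(0,\pi/2]$. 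Under the hypothesis $vz \le M_0$, part~(a) of Lemma~\ref{lem:F4rr} gives $\L^j(\cosh)(vz) \simeq 1$ uniformly for each $j = 1,\dots,k$, provided $v z$ stays in $(0,M_0]$ (and if $vz=0$ one checks the value $\L^j(\cosh)(0)>0$ directly, or simply notes $z>0$ here).

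**Pinning down the dominant term.** With these facts, the $j = k$ term is $\simeq v^{2k}$, while for $j < k$ the $j$-th term is $O(v^{2j}) = O(v^{2k-2})$. Hence choosing $v_0$ large enough that $v_0^{2} $ beats the sum of the implicit constants from the lower-order terms — concretely, so that the absolute values of all terms with $j<k$ together are at most half the lower bound for the $j=k$ term — we obtain, for $v \ge v_0$,
\[
\tfrac12\, c\, v^{2k} \le \D_z^k\big(\cosh(vz)\big) \le C\, v^{2k},
\]
uniformly in $z \in (0,\pi/2]$ with $vz \le M_0$. This is exactly $\D_z^k(\cosh(vz)) \simeq v^{2k}$.

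**Main obstacle.** There is no serious obstacle; the lemma is essentially a bookkeeping consequence of Lemmas~\ref{lem:A} and~\ref{lem:F4rr}. The only point requiring a little care is the uniformity: one must check that the bounds on $\Phi_{k,j}$ and on $\L^j(\cosh)$ are genuinely uniform in $z \in (0,\pi/2]$ and in the relevant range of $v$, and that the threshold $v_0$ can be chosen depending only on $k$ and $M_0$ (not on $z$ or $v$). Since $(0,\pi/2]$ has compact closure disjoint from $E$ except at $0$ — where $z/\sin z \to 1$ and each $\Phi_{k,j}$ extends continuously, being a sum of even functions analytic near $0$ — the functions $\Phi_{k,j}$ are in fact bounded and bounded below (for $j=k$) on all of $(0,\pi/2]$, so uniformity causes no trouble.
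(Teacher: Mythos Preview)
Your proof is correct and follows essentially the same approach as the paper: expand via Lemma~\ref{lem:A}, use Lemma~\ref{lem:F4rr}(a) to control $\L^j(\cosh)(vz)$, observe that on $(0,\pi/2]$ the coefficients $\Phi_{k,j}$ are bounded with $\Phi_{k,k}\simeq 1$, and conclude that the $j=k$ term dominates for $v$ large. The only minor omission is that you do not separately dispose of the trivial case $k=0$, where Lemma~\ref{lem:A} does not apply and one simply notes $\cosh(vz)\simeq 1$ for $vz\le M_0$.
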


\begin{proof}
The case $k=0$ is trivial, so we consider $k \ge 1$. 
Using Lemma~\ref{lem:A} with $F = \cosh$, we see that for any fixed $k \ge 1$
\begin{align} \label{iden100n} 
\D_z^k \big( \cosh (v z) \big) 
=
\sum_{j=1}^k v^{2j} L^j (\cosh) (vz) \Phi_{k,j} (z), 
\end{align}
and 
\begin{align} \label{estPhin}
\Phi_{k,k} (z) \simeq (\pi - z)^{-k}, 
\qquad
| \Phi_{k,j} (z) |
\lesssim
(\pi - z)^{-2k+j},
\qquad z \in (0,\pi)
\end{align}
for $1 \le j < k$.
Combining this with Lemma~\ref{lem:F4rr} (a), we see that
the $k$th term of the sum in \eqref{iden100n} dominates if $v$ is large enough. This implies the lemma.
\end{proof}

We will also need the following modification of Lemma~\ref{lem:2}.
\begin{lem} \label{lem:ch_gl}
Let $k \in \N$ be fixed. Then
\begin{align*} 
\big| \D_z^k \big( \cosh (v z) \big)  \big|
\lesssim
e^{vz} v^{2k} (\pi - z)^{-2k},
\qquad v \ge 1, \quad z \in (0,\pi).
\end{align*}
\end{lem}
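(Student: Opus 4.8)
The plan is to prove Lemma~\ref{lem:ch_gl} by the same mechanism as Lemma~\ref{lem:2}, but now using the global (rather than local) bounds from Lemma~\ref{lem:F4rr}(b) and Lemma~\ref{lem:A}. First I would invoke the expansion \eqref{for1} with $F=\cosh$, which is exactly \eqref{iden100n}:
\begin{equation*}
\D_z^k\big(\cosh(vz)\big) = \sum_{j=1}^k v^{2j}\, L^j(\cosh)(vz)\, \Phi_{k,j}(z), \qquad z \in (0,\pi).
\end{equation*}
Then I would estimate each of the three factors in a term separately. For the middle factor, Lemma~\ref{lem:F4rr}(b) gives $|L^j(\cosh)(vz)| \lesssim e^{vz}$ for all $v,z>0$. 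For the functions $\Phi_{k,j}$, the bounds already recorded in \eqref{estPhin} apply: $|\Phi_{k,j}(z)| \lesssim (\pi-z)^{-2k+j}$ for $1\le j\le k$ and $z\in(0,\pi)$ (the top term $\Phi_{k,k}$ even satisfies $\Phi_{k,k}(z)\simeq(\pi-z)^{-k}$, which in particular is $\lesssim (\pi-z)^{-k}$). These last bounds come from the fact, noted in Lemma~\ref{lem:A}, that $\Phi_{k,j}$ is even, meromorphic, with poles only in $E=\pi\Z\setminus\{0\}$ of order at most $2k-j$; near $z=\pi$ this forces the stated power-type blow-up, while on compact subsets of $(0,\pi)$ bounded away from $0$ and $\pi$ the $\Phi_{k,j}$ are bounded.

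Putting the three estimates together, term number $j$ is bounded by
\begin{equation*}
v^{2j}\, e^{vz}\, (\pi-z)^{-2k+j} \;=\; e^{vz}\, v^{2k}\, (\pi-z)^{-2k}\; \big(v(\pi-z)\big)^{j-2k}\, (\pi-z)^{2(j-k)}\cdot(\pi-z)^{2k-2j},
\end{equation*}
so it is cleaner to argue directly: since $v\ge 1$ and $0<\pi-z<\pi$, we have $v^{2j}(\pi-z)^{-2k+j} \le v^{2k}(\pi-z)^{-2k}\cdot v^{2j-2k}(\pi-z)^{j} \le v^{2k}(\pi-z)^{-2k}$, using $v^{2j-2k}\le 1$ and $(\pi-z)^{j}\le \pi^{j}\lesssim 1$. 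Hence every term is $\lesssim e^{vz} v^{2k}(\pi-z)^{-2k}$, and summing the finitely many ($k$) terms yields the claim.

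The only mild subtlety — and the closest thing to an obstacle — is making sure the bounds \eqref{estPhin} on $\Phi_{k,j}$ are genuinely available on \emph{all} of $(0,\pi)$, including near the endpoint $z=0$. At $z=0$ there is no pole (since $E$ excludes $0$), the $\Phi_{k,j}$ are analytic there, so $(\pi-z)^{-2k+j}\simeq 1$ is a valid lower bound for the right-hand side and no issue arises; near $z=\pi$ the pole order bound $2k-j$ from Lemma~\ref{lem:A} gives exactly the exponent $-(2k-j)$. One should also double-check that \eqref{estPhin}, as stated in the proof of Lemma~\ref{lem:2}, was indeed established for the full range $z\in(0,\pi)$ and not merely for $z\in(0,\pi/2]$; if only the latter, a one-line remark extending it to $(\pi/2,\pi)$ via the meromorphy-and-pole-order description of $\Phi_{k,j}$ suffices. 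With that in hand the lemma follows with no further computation.
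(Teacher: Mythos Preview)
Your proposal is correct and follows essentially the same approach as the paper: invoke the expansion \eqref{iden100n}, bound $L^j(\cosh)(vz)$ by $e^{vz}$ via Lemma~\ref{lem:F4rr}(b), bound $|\Phi_{k,j}(z)|$ by $(\pi-z)^{-2k+j}$ via \eqref{estPhin}, and then absorb $v^{2j}(\pi-z)^{-2k+j}$ into $v^{2k}(\pi-z)^{-2k}$ using $v\ge 1$ and $\pi-z<\pi$. The paper's proof is the same two-line computation; your only additions are the (harmless) digression before ``it is cleaner to argue directly'' and the remark on the validity of \eqref{estPhin} near $z=0$, and you should explicitly dispose of the trivial case $k=0$ as the paper does.
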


\begin{proof}
The case $k=0$ is trivial, so let $k \ge 1$. 
Using \eqref{iden100n}, \eqref{estPhin} and Lemma~\ref{lem:F4rr} (b) we infer that
\begin{align*} 
\big| \D_z^k \big( \cosh (v z) \big)  \big|
\lesssim
\sum_{j=1}^k v^{2j}
e^{vz} (\pi - z)^{-2k+j}
\lesssim
e^{vz} v^{2k} (\pi - z)^{-2k},
\end{align*}
uniformly in $v \ge 1$ and $z \in (0,\pi)$, as desired.
\end{proof}

\vspace{0,5cm}
\section{Proof of Theorem \ref{thm:step1}} \label{sec:st1}

In order to prove Theorem~\ref{thm:step1}, it is enough to show the following, see \eqref{rec_main} and \eqref{3theta}.

\begin{thm} \label{pro:Kodd}
Let $N \in \N$ be fixed. There exists $t_0 > 0$ such that
\begin{align*}
\H^N \vt_t (\vp)
& \simeq 
W_t(\vp) (t + \pi - \vp)^{-N} t^{-N},
\qquad \vp \in (0,\pi), \quad 0 < t \le t_0.
\end{align*}
\end{thm}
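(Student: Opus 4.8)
The plan is to reduce everything to the behaviour of the single series $\vt_t(\vp)=\sum_n W_t(\vp+2\pi n)$ under the operator $\H=\D$, exploiting that $\D$ acts nicely on functions of the distance. First I would observe that $\D^N$ commutes with the summation, so it suffices to understand $\D^N$ applied termwise, but this is delicate because individual terms $W_t(\vp+2\pi n)$ are not even functions of $\vp$ around the relevant points. The cleaner route is to write $\vt_t$ in a form to which Lemma~\ref{lem:A} applies: since $\vt_t(\vp)$ is, up to the Gaussian prefactor $W_t(\vp)$, an even entire function of $\vp$ (indeed a theta-type function), one can hope to write $\vt_t(\vp) = W_t(\vp)\,g_t(\vp)$ where $g_t$ is even and entire in $\vp$, and then expand $\D^N(W_t(\vp)g_t(\vp))$ by Leibniz. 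Concretely, $W_t(\vp) = (4\pi t)^{-1/2}e^{-\vp^2/(4t)}$ and $W_t(\vp+2\pi n)/W_t(\vp) = e^{-(\pi n \vp)/t}e^{-\pi^2 n^2/t}$, so $g_t(\vp)=\sum_n e^{-\pi^2n^2/t}e^{-\pi n\vp/t}$, which is manifestly entire; its even part is what matters, and one can symmetrize using $\vp \mapsto -\vp$. The main structural identity will then be that $\D^N\big(W_t(\vp)g_t(\vp)\big)$ is a sum of terms $W_t(\vp)\cdot(\text{polynomial in }1/t,\vp)\cdot L^j g_t(\vp)\cdot\Phi_{N,j}(\vp)$-type expressions, using Lemma~\ref{lem:A} after rescaling.

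Second, I would estimate the pieces. The factor $(\vp/\sin\vp)^N = \Phi_{N,N}(\vp)$ and more generally $\Phi_{N,j}(\vp)$ behave like $(\pi-\vp)^{-(2N-j)}$ near $\vp=\pi$ by \eqref{estPhin}, and near $\vp=0$ they are bounded; the derivatives of the Gaussian contribute powers of $1/t$ and of $\vp/t$. The key point is that the leading contribution, giving both the upper and the lower bound, comes from the top term $j=N$ together with the "pure Gaussian differentiation" piece: differentiating $e^{-\vp^2/(4t)}$ repeatedly produces a factor essentially $(1/t)^N$ (from the $\vp$-independent part of the Hermite-type polynomial) times lower-order terms in $\vp/t$. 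Collecting these, the dominant behaviour should be $W_t(\vp)\,t^{-N}(\pi-\vp)^{-N}$ near $\vp=\pi$, and $W_t(\vp)\,t^{-N}$ near $\vp=0$ where $\pi-\vp\simeq 1$; these combine into $W_t(\vp)(t+\pi-\vp)^{-N}t^{-N}$, which is exactly the claimed bound (note $t+\pi-\vp\simeq \pi-\vp$ when $\pi-\vp\ge t$ and $\simeq t$ when $\pi-\vp< t$). For the lower bound one needs that $g_t(\vp)$ and its images $L^jg_t(\vp)$ are bounded below by positive constants uniformly for $\vp\in(0,\pi)$ and small $t$ — this follows because $g_t(\vp)\to 1$ as $t\to 0^+$ uniformly on $[0,\pi]$ (all terms with $n\ne 0$ are exponentially small in $1/t$), and similarly $L^jg_t\to L^j(1)$; one must check $L^j(1)$ and more carefully the even entire function whose value at $0$ is positive, analogous to Lemma~\ref{lem:F4rr}(a).

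Third, I would handle the competition between terms. The terms with $1\le j<N$ carry $\Phi_{N,j}\lesssim(\pi-\vp)^{-2N+j}$, which is \emph{worse} (more singular) than $(\pi-\vp)^{-N}$ when $j<N$, so naively they seem to dominate near $\vp=\pi$; the resolution is that each such term also carries a compensating factor. In Lemma~\ref{lem:2}'s proof the compensation was a power $v^{2j}$ with $v$ large; here the analogous large parameter should be $1/t$ or $\vp/t$ coming from the Gaussian derivatives, and one must track that the term $j$ comes with at most $t^{-j}$ (roughly) while also, crucially, the cancellation inside the theta series tames the would-be-singular contributions near $\vp=\pi$. I expect \textbf{this balancing — showing that the $j=N$ term genuinely dominates globally on $(0,\pi)$, not just locally — to be the main obstacle}, together with the bookkeeping needed to get matching constants in the two-sided bound; this is presumably where the "elementary but technically involved" work of the paper concentrates. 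The endpoint $\vp\to\pi$ is the most dangerous: one must verify that the singularities of the $\Phi_{N,j}$ at $\vp=\pi$ are killed by the vanishing of the appropriate derivatives of the even entire factor, so that $\H^N\vt_t$ actually stays comparable to $W_t(\pi)t^{-N}(\pi-\vp)^{-N}$ and in particular is finite and positive at $\vp=\pi$, matching Molchanov's known value for $K_t^d(\pi)$. A convenient technical device is to split $(0,\pi)$ into $\pi-\vp\le t$ and $\pi-\vp>t$ (and possibly $\vp\le 1$ versus $\vp$ near $\pi$) and argue separately, using Lemma~\ref{lem:ch_gl}-type global bounds on the subdominant range and the sharp Lemma~\ref{lem:2}-type two-sided bounds on the dominant range.
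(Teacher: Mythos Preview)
Your overall shape is right: split according to whether $(\pi-\vp)/t$ is large or small, and in the large-ratio regime write $\vt_t(\vp)=W_t(\vp)g_t(\vp)$ with $g_t(\vp)=1+2\sum_{n\ge1}e^{-\pi^2n^2/t}\cosh(\pi n\vp/t)$, then show that the $j=N$ term from \eqref{iden111} dominates. This is essentially the paper's Lemma~\ref{lem:ularge}/\ref{lem:thetalar}, and your sketch for that half is sound.

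The gap is in the small-ratio regime $\pi-\vp\lesssim t$. Your decomposition $\vt_t=W_t\cdot g_t$ cannot deliver the two-sided bound there. First, note the target near $\vp=\pi$ is $W_t(\vp)\,t^{-2N}$, not $W_t(\pi)\,t^{-N}(\pi-\vp)^{-N}$ as you wrote (the latter blows up). Second, in your factorization every piece $\D^{N-k}W_t(\vp)$ and $\D^k_{\vp}g_t(\vp)$ individually carries the pole structure of the $\Phi_{m,j}(\vp)$ at $\vp=\pi$; the finiteness of $\H^N\vt_t(\pi)$ comes from delicate cancellations \emph{between} Leibniz terms, and you have no mechanism to extract a sharp lower bound from that. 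Lemmas~\ref{lem:2} and~\ref{lem:ch_gl} do not help directly either: in your $g_t$ the $\cosh$ is evaluated at $\pi n\vp/t$, which near $\vp=\pi$ violates the hypothesis $vz\le M_0$ of Lemma~\ref{lem:2}, while Lemma~\ref{lem:ch_gl} reintroduces the singular factor $(\pi-\vp)^{-2k}$.

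The missing idea (this is the paper's Lemma~\ref{lem:usmall}) is to \emph{re-center and re-pair}: set $\psi=\pi-\vp$ and group $W_t(\vp)+W_t(\vp-2\pi)=2e^{-\pi^2/(4t)}W_t(\psi)\cosh(\pi\psi/(2t))$, and more generally pair the terms $n$ and $-(n+1)$ rather than $n$ and $-n$. Then $\D_\vp=-\D_\psi$, the factor $W_t(\psi)$ is harmless (Corollary~\ref{cor:ularge} with $\psi$ small), and $\D_\psi^k\big(\cosh(v\psi)\big)$ with $v=\pi/(2t)$ is now in the regime $v\psi\le M_0$ of Lemma~\ref{lem:2}, yielding $\simeq t^{-2k}$. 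The $k=N$ Leibniz term then dominates and gives the lower bound $W_t(\vp)\,t^{-2N}$. Without this change of center the proof does not close.
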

This result is a straightforward consequence of the following two lemmas.

\begin{lem} \label{lem:thetalar}
Let $N \in \N$ be fixed. There exist $M_0, t_0 > 0$ and constants $c, C > 0$ such that
\begin{align*}
c
W_t(\vp) (\pi - \vp)^{-N} t^{-N}
\le 
\H^N \vt_t (\vp)
\le C
W_t(\vp) (\pi - \vp)^{-N} t^{-N}
\end{align*}
holds for $\vp \in (0,\pi)$ and $0 < t \le t_0$ satisfying 
$(\pi - \vp)/t \ge M_0$.
\end{lem}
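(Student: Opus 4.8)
The plan is to work directly with the defining series $\vt_t(\vp) = \sum_{n\in\Z} W_t(\vp+2\pi n)$ and to apply the operator $\H^N = (\sin z)^{-N}(d/dz)^N$ term by term, exploiting that $W_t$ is (up to the normalizing constant) a Gaussian, hence after rescaling an even entire function of a Gaussian type. Write $W_t(x) = (4\pi t)^{-1/2} e^{-x^2/(4t)}$. Since the series for $\vt_t$ and all its termwise derivatives converge locally uniformly on $\C\setminus E$ (the terms decay superexponentially in $n$), we have $\H^N\vt_t(\vp) = \sum_{n\in\Z} \H^N\big(W_t(\cdot+2\pi n)\big)(\vp)$. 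The point of the hypothesis $(\pi-\vp)/t \ge M_0$ is that it forces $\vp$ to stay away from $\pi$, so that $\sin\vp \simeq \pi-\vp$ and, more importantly, the $n=0$ term of the series dominates all others in a quantitative way; the tail $\sum_{n\ne 0}$ will turn out to be negligible relative to the main term.

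First I would isolate the $n=0$ term. The function $x\mapsto e^{-x^2/(4t)}$ is even and entire, and a direct computation (or an application of Lemma~\ref{lem:A} with $F(x) = e^{-x^2/(4t)}$, after the substitution $z = \vp$, $v$ absorbed into the scaling) shows
\begin{equation*}
\H^N\big(W_t(\cdot)\big)(\vp) = W_t(\vp) \sum_{j=1}^{N} t^{-j}\, P_j(\vp^2/t)\, \Phi_{N,j}(\vp),
\end{equation*}
where each $P_j$ is a polynomial and $\Phi_{N,N}(\vp) = (\vp/\sin\vp)^N$, while the lower $\Phi_{N,j}$ have poles only in $E$. In the regime $(\pi-\vp)/t \ge M_0$ one has $\Phi_{N,N}(\vp)\simeq (\pi-\vp)^{-N}$ and $|\Phi_{N,j}(\vp)|\lesssim (\pi-\vp)^{-2N+j}$ by the estimates \eqref{estPhin} used in Lemmas~\ref{lem:2} and \ref{lem:ch_gl}. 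The leading $j=N$ term contributes $W_t(\vp)\, t^{-N} (\pi-\vp)^{-N}$ up to a constant, which is exactly the claimed size; the subleading terms carry extra negative powers of $t$ but compensating positive powers of $\pi-\vp$, and since $t\lesssim \pi-\vp$ in our regime (this is where $M_0$ enters, after shrinking $t_0$), each of them is bounded by the leading term times a constant. One also has to keep track of the polynomial factors $P_j(\vp^2/t)$: here I would use that $\vp\le\pi$ is bounded, so $\vp^2/t \lesssim 1/t$, and these powers of $1/t$ are exactly the $t^{-j}$ already accounted for; more care is needed for the lower bound, where one must check that cancellation among the $P_j$ does not kill the main term — but evaluating the leading behavior as $t\to 0$ with $\vp$ fixed away from $0$ and $\pi$ shows the $j=N$ term genuinely dominates, and a compactness/continuity argument on the relevant range of $\vp^2/t$ handles the rest.

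Second, I would bound the tail $\sum_{n\ne 0}\H^N(W_t(\cdot+2\pi n))(\vp)$. For each such $n$, $|\vp+2\pi n|\ge \pi$ (indeed $\ge 2\pi - \pi = \pi$ for $n=-1$, and larger otherwise), so $W_t(\vp+2\pi n) \le W_t(\vp)\, e^{-((\vp+2\pi n)^2-\vp^2)/(4t)}$, and the exponent is $\gtrsim (\pi-\vp)/t \cdot |n|$ — in particular at least $M_0|n|$ — so summing a geometric-type series in $n$ gives a factor $e^{-cM_0}$ that can be made as small as we like by taking $M_0$ large. The derivatives $\H^N$ applied to these terms produce, by the same Leibniz/Lemma~\ref{lem:A} computation, extra polynomial factors in $(\vp+2\pi n)^2/t$ and in $1/\sin\vp$, but these are at most polynomial in $n$ and in $1/(\pi-\vp)\le M_0^{-1}$-type quantities, hence are absorbed by the exponential gain. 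Thus for $M_0$ large enough the tail is bounded by, say, half the main term, which establishes both the upper and the lower bound once the $n=0$ analysis is in place.

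The main obstacle I expect is the lower bound in the $n=0$ computation: one must show that the alternating polynomial coefficients $P_j(\vp^2/t)$ combined with the meromorphic factors $\Phi_{N,j}$ do not conspire to cancel the leading $t^{-N}(\pi-\vp)^{-N}$ term. I would handle this by writing $\H^N(W_t)(\vp)/\big(W_t(\vp)\,t^{-N}(\pi-\vp)^{-N}\big)$ as a function of the two parameters $s := \vp^2/t$ and $\vp\in(0,\pi)$, showing it is continuous and strictly positive, tends to the nonzero constant $(\pi/\pi)^{N}\cdot(\text{leading coeff})$ along the relevant limit, and is bounded below by a positive constant on the parameter region $\{(\pi-\vp)/t\ge M_0,\ 0<t\le t_0\}$ after possibly enlarging $M_0$ and shrinking $t_0$ — essentially the same dominance argument already used in Lemma~\ref{lem:2}, transplanted from $\cosh$ to the Gaussian.
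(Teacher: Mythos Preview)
Your overall strategy---isolate the $n=0$ term and show the tail $\sum_{n\ne 0}$ is negligible---is exactly the paper's, and your analysis of the $n=0$ term is essentially correct but overcomplicated. The key identity you are missing is $L^j W_1 = (-\tfrac12)^j W_1$ (since $L\,e^{-x^2/4}=-\tfrac12 e^{-x^2/4}$), so in your expansion the ``polynomials'' $P_j(\vp^2/t)$ are in fact the constants $(-\tfrac12)^j$; this is precisely \eqref{iden111}. Consequently the ``main obstacle'' you flag---possible cancellation among the $P_j$---does not exist: the leading $j=N$ term carries the positive constant $2^{-N}$ and dominates the lower ones as soon as $(\pi-\vp)/t$ is large, by the same comparison you sketch. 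No compactness argument is needed.

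There is, however, a genuine gap in your tail estimate. You claim the singular factors produced by $\H^N$ on $W_t(\cdot+2\pi n)$ are ``at most polynomial in $1/(\pi-\vp)\le M_0^{-1}$-type quantities''. This is false on two counts. First, $(\pi-\vp)/t\ge M_0$ gives only $1/(\pi-\vp)\le 1/(M_0 t)$, not $\le M_0^{-1}$. Second and more seriously, $1/\sin\vp \simeq 1/(\vp(\pi-\vp))$, and the factor $1/\vp$ is not controlled by the hypothesis at all. Concretely, for $n\ne 0$ the function $\vp\mapsto W_t(\vp+2\pi n)$ is \emph{not} even, so Lemma~\ref{lem:A} does not apply in the variable $\vp$; using periodicity of $\sin$ one has $D_\vp^N W_t(\vp+2\pi n)=(D^N W_t)(\vp+2\pi n)$, and the right-hand side has a genuine pole at $\vp=0$ (since $\vp+2\pi n\in E$). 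Already for $N=1$, $|D W_t(\vp+2\pi n)|\simeq \frac{|n|}{t\vp}\,W_t(2\pi n)$ as $\vp\to 0$, which blows up and is not dominated by the main term $W_t(\vp)(\pi-\vp)^{-1}t^{-1}\simeq t^{-3/2}$. So summing absolute values term by term, as you propose, diverges near $\vp=0$.

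The paper fixes this by pairing $n$ with $-n$: writing $W_t(\vp-2\pi n)+W_t(\vp+2\pi n)=2e^{-\pi^2 n^2/t}W_t(\vp)\cosh(\pi n\vp/t)$ makes the tail an \emph{even} function of $\vp$, so $D^N$ of it is regular at $0$. Leibniz' rule then splits the work between $D^{N-k}W_t(\vp)$ (handled by the $n=0$ case, Lemma~\ref{lem:ularge}) and $D_\vp^k\cosh(\pi n\vp/t)$ (handled by Lemma~\ref{lem:ch_gl}, whose bound involves only $(\pi-\vp)^{-2k}$, not $1/\vp$). Your argument can be repaired by inserting exactly this pairing step before estimating.
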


\begin{lem} \label{lem:thetasm}
Let $N \in \N$ be fixed. For any $M_0 > 0$ there exist $t_0 > 0$ and constants $c, C > 0$ such that
\begin{align*}
c
W_t(\vp) t^{-2N}
\le 
\H^N \vt_t (\vp)
\le C
W_t(\vp) t^{-2N}
\end{align*}
holds for $\vp \in (0,\pi)$ and $0 < t \le t_0$ satisfying 
$(\pi - \vp)/t \le M_0$.
\end{lem}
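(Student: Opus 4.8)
The plan is to prove Lemma~\ref{lem:thetasm} by reducing the estimate on $\H^N\vt_t(\vp)$ to a pointwise estimate on a single term of the theta series, exactly the term $n=0$ coming from $W_t(\vp)$ itself, plus control of all the remaining terms. Recall that $\vt_t(\vp) = \sum_{n\in\Z} W_t(\vp+2\pi n)$ and that in the regime under consideration $\vp$ is close to $\pi$, i.e.\ $\pi-\vp \le M_0 t$ with $t$ small. First I would write $\vt_t(\vp) = W_t(\vp) \, G_t(\vp)$, where
\begin{equation*}
G_t(\vp) = \sum_{n\in\Z} \exp\Bigl(-\frac{(\vp+2\pi n)^2 - \vp^2}{4t}\Bigr) = \sum_{n\in\Z} \exp\Bigl(-\frac{\pi n (\vp + \pi n)}{t}\Bigr),
\end{equation*}
so that $G_t$ is a smooth, strictly positive, $2\pi$-periodic function; for $\vp$ near $\pi$ one has $G_t(\vp) = 1 + O(e^{-c/t})$ together with all derivatives, since every term except $n=0$ is exponentially small when $t \to 0$. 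Then applying $\H^N = (\frac{1}{\sin z}\frac{d}{dz})^N$ to the product $W_t(\vp) G_t(\vp)$ via the Leibniz-type expansion from Lemma~\ref{lem:A} (with $F(x) = W_t(x)/W_t(0)$, say, an even entire function after the obvious rescaling) reduces everything to understanding $\H^N$ applied to the Gaussian factor alone, because the $G_t$-factor and its derivatives are $1 + O(e^{-c/t})$ uniformly.

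Next I would make the Gaussian computation explicit. Since $W_t(\vp) = (4\pi t)^{-1/2} e^{-\vp^2/(4t)}$ is an even entire function of $\vp$ (up to the constant), Lemma~\ref{lem:A} with the substitution $v = (2t)^{-1/2}$, $z = \vp$ — more precisely applied to $x \mapsto \cosh(x/\sqrt{2t})$-type scalings, or directly to $e^{-\vp^2/(4t)} = $ (even entire in $\vp$) — gives
\begin{equation*}
\H^N\bigl(e^{-\vp^2/(4t)}\bigr) = \sum_{j=1}^N (2t)^{-j} \bigl(L^j \mathrm{e}_t\bigr)(\vp)\, \Phi_{N,j}(\vp),
\end{equation*}
where $\mathrm{e}_t(x) = e^{-x^2/(4t)}$ and $L = \frac1z \frac{d}{dz}$. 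One checks $L^j \mathrm{e}_t(\vp) = P_j(\vp^2/t)\, e^{-\vp^2/(4t)} \, t^{-j}\cdot(\text{const})$ for suitable polynomials; but in the regime $\pi-\vp \le M_0 t$, $t$ small, the argument $\vp$ stays in $(0,\pi)$ bounded away from $0$, so $\vp^2/t \to \infty$, and one has to be careful: the relevant smallness is in $t$, not in $\vp^2/t$. Here I would instead argue as in the proof of Lemma~\ref{lem:2}: among the functions $\Phi_{N,j}$, only $\Phi_{N,N}(\vp) = (\vp/\sin\vp)^N$ is singular of the top order at $\vp = \pi$, with $\Phi_{N,N}(\vp) \simeq (\pi-\vp)^{-N}$, while $|\Phi_{N,j}(\vp)| \lesssim (\pi-\vp)^{-2N+j}$ for $j<N$; combined with $(\pi-\vp) \le M_0 t$ this shows the $j$th term is $\lesssim t^{-j}(\pi-\vp)^{-2N+j} e^{-\vp^2/(4t)} \lesssim t^{-2N} e^{-\vp^2/(4t)}$ for all $j$. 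Thus $|\H^N(e^{-\vp^2/(4t)})| \lesssim t^{-2N} e^{-\vp^2/(4t)}$, which gives the upper bound. For the lower bound I would isolate the top term $j=N$: it equals $(2t)^{-N} (L^N\mathrm{e}_t)(\vp)(\vp/\sin\vp)^N$, and since $L^N \mathrm{e}_t(\vp) = (-2t)^{-N} e^{-\vp^2/(4t)} \bigl(1 + O(t)\bigr)$ uniformly for $\vp$ near $\pi$ (the leading term of $L^N$ acting on $e^{-\vp^2/(4t)}$ is $(-1/(2t))^N$ times the function, the lower-order corrections carrying extra powers of $t/\vp^2 \lesssim t$), and $(\vp/\sin\vp)^N \simeq (\pi-\vp)^{-N} \ge (M_0 t)^{-N}$, this single term is $\simeq t^{-N}(\pi-\vp)^{-N} e^{-\vp^2/(4t)} \gtrsim t^{-2N} e^{-\vp^2/(4t)}$; the remaining $j<N$ terms are of strictly smaller order in $(\pi-\vp)^{-1}$, hence negligible against the top term precisely because $\pi-\vp \le M_0 t$. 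Reintroducing the factor $G_t(\vp) = 1+O(e^{-c/t})$ and its derivatives (which only perturb things by $O(e^{-c/t})$, absorbed into the error) completes both bounds, and restoring the constant $(4\pi t)^{-1/2}$ turns $e^{-\vp^2/(4t)}$ back into $W_t(\vp)$.

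I expect the main obstacle to be the bookkeeping in the lower bound: one must show that, in the stated regime, the single top-order term genuinely dominates the sum, and this hinges on a clean estimate of $L^j \mathrm{e}_t(\vp) = L^j\bigl(e^{-\cdot^2/(4t)}\bigr)(\vp)$ that separates the ``large'' factor $t^{-j}$ from a factor that is uniformly bounded above and below for $\vp$ in a fixed left-neighbourhood of $\pi$. The cleanest route is probably to write $L^j(e^{-z^2/(4t)}) = (-2t)^{-j} e^{-z^2/(4t)} Q_j(z^2/t)$, where $Q_j$ is a polynomial with $Q_j(0)=1$, and then observe that for $\vp\in[\pi - M_0 t, \pi)$ we have $\vp^2/t \ge (\pi-M_0t)^2/t \to \infty$, so $Q_j(\vp^2/t)$ is \emph{not} close to $1$ — meaning this naive expansion is the wrong one. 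Instead one should substitute $\vp = \pi - s$ with $0 < s \le M_0 t$ and expand in $s$: then $e^{-\vp^2/(4t)} = e^{-\pi^2/(4t)} e^{\pi s/(2t)} e^{-s^2/(4t)}$, and since $\pi s/(2t)$ is bounded, the $s$-dependent factors and all their derivatives are $1 + O(t)$, so that $\H^N = (\frac{1}{\sin(\pi - s)}(-\partial_s))^N$ acting on this is governed by $(\frac{1}{\sin s}\partial_s)^N$ of a function that is essentially constant, yielding the claimed $(\sin s)^{-N}\cdot(\text{lower order}) \simeq s^{-N} \simeq (\pi-\vp)^{-N}$ — and then $t^{-N}$ enters only through the trivial bound $(\pi-\vp)^{-N} \le$ nothing; rather $t^{-2N}$ appears because $\H^N$ brings $N$ factors of $\partial_s$ each producing a $1/t$ once it hits the genuinely $t$-dependent piece. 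Reconciling these two viewpoints carefully — and making sure the constants are uniform as $M_0$ is fixed and $t\to 0$ — is where the real work lies; the upper bound, by contrast, follows quickly from Lemma~\ref{lem:ch_gl} applied with $\cosh$ replaced by the Gaussian (equivalently, from the bounds \eqref{estPhin} on $\Phi_{N,j}$ together with $\pi-\vp \le M_0 t$).
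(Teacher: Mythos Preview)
Your proposal has a genuine gap at the very first step, and it propagates through the rest of the argument.

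You write $\vt_t(\vp)=W_t(\vp)\,G_t(\vp)$ and claim that in the regime $(\pi-\vp)/t\le M_0$ one has $G_t(\vp)=1+O(e^{-c/t})$ together with all derivatives. This is false: the $n=-1$ term of $G_t$ equals
\[
\exp\!\Big(-\frac{(\vp-2\pi)^2-\vp^2}{4t}\Big)=\exp\!\Big(-\frac{\pi(\pi-\vp)}{t}\Big),
\]
which in the regime at hand lies between $e^{-\pi M_0}$ and $1$, hence is of order $1$, not exponentially small. Its $\vp$-derivatives are of order $t^{-k}$, not $O(e^{-c/t})$. So the reduction to the single Gaussian $W_t(\vp)$ is not legitimate.

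This is not merely a cosmetic issue: $\H^N W_t(\vp)$ by itself \emph{cannot} satisfy the two-sided bound of the lemma. Indeed, $W_t$ is not even about $\vp=\pi$, so $\partial_\vp W_t(\pi)\ne 0$ and already $(-D)W_t(\vp)=\dfrac{\vp}{2t\sin\vp}\,W_t(\vp)$ blows up like $(\pi-\vp)^{-1}$ as $\vp\to\pi$ with $t$ fixed; higher powers $\H^N$ blow up like $(\pi-\vp)^{-(2N-1)}$. Thus the upper bound $\lesssim W_t(\vp)\,t^{-2N}$ fails for $W_t$ alone. Relatedly, your assertion that ``the remaining $j<N$ terms are of strictly smaller order in $(\pi-\vp)^{-1}$'' has the inequality reversed: by Lemma~\ref{lem:A} the pole of $\Phi_{N,j}$ at $\pi$ has order up to $2N-j$, so the smaller $j$ are \emph{more} singular, and (for instance) $\Phi_{2,1}$ has a genuine pole of order $3$ at $\pi$, strictly worse than $\Phi_{2,2}$. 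Your substitution $\vp=\pi-s$ does not cure this, because the function $s\mapsto e^{\pi s/(2t)}e^{-s^2/(4t)}$ is not even in $s$, so $D_s$ applied to it again produces a $1/\sin s$ singularity at $s=0$.

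The missing idea is exactly the pairing the paper uses: one must combine the $n=0$ and $n=-1$ terms and analyze
\[
W_t(\vp)+W_t(\vp-2\pi)=2\,e^{-\pi^2/(4t)}\,W_t(\psi)\,\cosh\frac{\pi\psi}{2t},\qquad \psi=\pi-\vp,
\]
which \emph{is} even in $\psi$; then $D_\psi^N$ stays finite at $\psi=0$, and Lemma~\ref{lem:2} together with Corollary~\ref{cor:ularge} give the sharp two-sided bound $\simeq W_t(\vp)\,t^{-2N}$ (this is the content of Lemma~\ref{lem:usmall}). The remaining terms of the theta series are then grouped in pairs $n\leftrightarrow -(n+1)$ for $n\ge 1$, each pair again producing a $\cosh$ factor, and are shown via Lemma~\ref{lem:ch_gl} to be $O\big(W_t(\vp)\,t^{-2N}e^{-\pi^2/(4t)}\big)$, hence negligible for small $t$.
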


The proofs of Lemmas~\ref{lem:thetalar} and \ref{lem:thetasm} will be given in Sections~\ref{ssec:1} and \ref{ssec:7}. 
First, however, we need two crucial intermediate results, Lemmas \ref{lem:ularge} and \ref{lem:usmall} below.

\begin{lem} \label{lem:ularge}
The assertion of Lemma \ref{lem:thetalar} is true if $\vt_t$ is replaced by $W_t$, and this with $t_0=\infty$.
\end{lem}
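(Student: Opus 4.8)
The plan is to compute $\H^N W_t(\vp)$ essentially exactly, using Lemma~\ref{lem:A} together with the fact that the Gauss--Weierstrass kernel is an eigenfunction of the operator $L$. Indeed, for every fixed $t>0$ the function $x \mapsto W_t(x)$ is even and entire, and since $W_t'(x) = -\tfrac{x}{2t}\,W_t(x)$ we get $L W_t = -\tfrac1{2t}\,W_t$, hence $L^j W_t = (-2t)^{-j}\,W_t$ for every $j \in \N$. Applying Lemma~\ref{lem:A} with $F = W_t$, $v=1$ and $z = \vp \in (0,\pi)$, and then using $\H = -\D$, one obtains the identity
\[
\H^N W_t(\vp) \;=\; W_t(\vp) \sum_{j=1}^{N} \frac{(-1)^{N-j}}{(2t)^j}\, \Phi_{N,j}(\vp),
\]
with the $\Phi_{N,j}$ of Lemma~\ref{lem:A}; in particular the top term carries $\Phi_{N,N}(\vp) = (\vp/\sin\vp)^N$ with coefficient $+1$.

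Next I would invoke the estimates \eqref{estPhin}, valid on all of $(0,\pi)$, namely $\Phi_{N,N}(\vp) \simeq (\pi-\vp)^{-N}$ and $|\Phi_{N,j}(\vp)| \lesssim (\pi-\vp)^{-2N+j}$ for $1 \le j \le N-1$. The term $j=N$ in the displayed identity equals $W_t(\vp)(2t)^{-N}\Phi_{N,N}(\vp)$, which is $\simeq W_t(\vp)(\pi-\vp)^{-N}t^{-N}$, precisely the claimed order. For $j<N$ one has
\[
\frac{|\Phi_{N,j}(\vp)|}{(2t)^j} \;\lesssim\; (\pi-\vp)^{-N} t^{-N} \Big(\frac{t}{\pi-\vp}\Big)^{N-j},
\]
and since $N-j \ge 1$, on the region $(\pi-\vp)/t \ge M_0$ (with $M_0 \ge 1$) each such term is $\lesssim M_0^{-1}(\pi-\vp)^{-N}t^{-N}$. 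Summing the finitely many terms with $j<N$ and choosing $M_0$ large enough — depending only on $N$ and the constants implicit in \eqref{estPhin} — their total absolute value does not exceed, say, half of the leading term $W_t(\vp)(2t)^{-N}\Phi_{N,N}(\vp)$. The triangle inequality then traps $\H^N W_t(\vp)$ between two fixed positive multiples of $W_t(\vp)(\pi-\vp)^{-N}t^{-N}$, which is the assertion; and since no upper bound on $t$ is ever used, it holds with $t_0=\infty$.

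The argument is largely mechanical once the exact identity above is in place, the one genuinely substantive ingredient being the eigenrelation $L^j W_t = (-2t)^{-j}W_t$, which collapses Lemma~\ref{lem:A} into a clean polynomial in $1/t$ with the $\Phi_{N,j}$ as coefficients. The only point requiring a bit of care is the selection of $M_0$, which must simultaneously absorb the $N-1$ implicit constants coming from \eqref{estPhin}; it is also worth noting (though immaterial to the estimate) that the region $\{(\vp,t)\colon \vp \in (0,\pi),\ (\pi-\vp)/t \ge M_0\}$ automatically forces $t < \pi/M_0$, so the conclusion ``$t_0=\infty$'' is a genuine strengthening rather than a vacuous one.
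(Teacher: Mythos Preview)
Your proof is correct and follows essentially the same route as the paper: derive the identity $\H^N W_t(\vp) = W_t(\vp)\sum_{j=1}^{N} (-1)^{N+j}(2t)^{-j}\Phi_{N,j}(\vp)$ from Lemma~\ref{lem:A} and the eigenrelation $L W_t = -(2t)^{-1}W_t$, then use \eqref{estPhin} to see that the $j=N$ term dominates once $(\pi-\vp)/t \ge M_0$ with $M_0$ large. The only cosmetic difference is that the paper applies Lemma~\ref{lem:A} with $F=W_1$ and $v=t^{-1/2}$, whereas you take $F=W_t$ and $v=1$; both yield the same identity \eqref{iden111}.
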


Observe that if Lemma \ref{lem:thetalar} (hence also Lemma \ref{lem:ularge})
holds with some $M_0 > 0$, then it also holds with any larger $M_0$ and the same $t_0,c,C$.
This will be frequently used in the sequel without further mention. 
Furthermore, we have the following straightforward consequence of Lemma~\ref{lem:ularge}, which will be needed in the proof of Lemma~\ref{lem:usmall}.

\begin{cor} \label{cor:ularge}
Let $N \in \N$ be fixed. There exists $t_0 > 0$ such that
\begin{align*}
\H^N W_t (\psi)
\simeq
W_t(\psi) t^{-N},
\qquad \psi \in (0,\pi/2], \quad 0 < t \le t_0.
\end{align*}
\end{cor}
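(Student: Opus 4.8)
The plan is to obtain Corollary~\ref{cor:ularge} as an immediate specialization of Lemma~\ref{lem:ularge}: restricting $\psi$ to $(0,\pi/2]$ keeps the argument uniformly away from the antipodal point $\pi$, which is the only place where the factor $(\pi-\vp)^{-N}$ occurring in Lemma~\ref{lem:ularge} is non-trivial, so for such $\psi$ that factor simply becomes a harmless constant.

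First I would fix $N$ and let $M_0>0$ together with the comparison constants be those furnished by Lemma~\ref{lem:ularge} for this $N$. I would then set, for instance, $t_0 = \pi/(2M_0)$. For $\psi\in(0,\pi/2]$ one has $\pi-\psi\in[\pi/2,\pi)$, hence $\pi-\psi\simeq 1$ and therefore $(\pi-\psi)^{-N}\simeq 1$; moreover $(\pi-\psi)/t\ge (\pi/2)/t_0 = M_0$ for every $0<t\le t_0$, so the hypothesis of Lemma~\ref{lem:ularge} is met with $\vp=\psi$. Feeding this in gives
$$
\H^N W_t(\psi)\simeq W_t(\psi)\,(\pi-\psi)^{-N}\,t^{-N}\simeq W_t(\psi)\,t^{-N},
$$
uniformly in $\psi\in(0,\pi/2]$ and $0<t\le t_0$, which is exactly the claimed estimate.

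I do not expect any genuine obstacle here: the corollary is a pure restriction of Lemma~\ref{lem:ularge} with $\pi-\vp$ frozen to a quantity bounded above and below, and the only thing to verify is the elementary bookkeeping relating $t_0$ to $M_0$ sketched above. The single point worth stating explicitly is that the cut-off $\psi\le\pi/2$ (as opposed to $\psi<\pi$) is what makes the passage from the $(\pi-\vp)^{-N}$-weighted bound to the unweighted one legitimate.
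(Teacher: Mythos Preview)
Your proposal is correct and matches the paper's intended argument: the paper simply states that the corollary is a ``straightforward consequence of Lemma~\ref{lem:ularge}'' without further detail, and your derivation---restricting to $\psi\in(0,\pi/2]$ so that $\pi-\psi\simeq 1$ and choosing $t_0=\pi/(2M_0)$ to guarantee the hypothesis $(\pi-\psi)/t\ge M_0$---is precisely how one unpacks that remark.
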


\begin{lem} \label{lem:usmall}
The assertion of Lemma \ref{lem:thetasm} is true if $\vt_t$ is replaced by $W_t+W_t(\cdot - 2\pi)$.
\end{lem}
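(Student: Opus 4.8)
The plan is to pass to the variable $\psi = \pi - \vp$, which is small on the region in question, and then to separate the Gaussian from the hyperbolic cosine by a Leibniz rule. First I would observe that the substitution $\vp = \pi - \psi$ turns the operator $\H = -D = -\frac{1}{\sin\vp}\frac{\mathrm{d}}{\mathrm{d}\vp}$ into the operator $\D = \frac{1}{\sin\psi}\frac{\mathrm{d}}{\mathrm{d}\psi}$, since $\sin\vp = \sin\psi$ and $\mathrm{d}/\mathrm{d}\vp = -\mathrm{d}/\mathrm{d}\psi$. Using that $W_t$ is even one computes
\begin{equation*}
W_t(\vp) + W_t(\vp - 2\pi) = W_t(\pi - \psi) + W_t(\pi + \psi) = 2 W_t(\pi)\, e^{-\psi^2/(4t)} \cosh\Big(\frac{\pi\psi}{2t}\Big) =: g_t(\psi),
\end{equation*}
an even, entire function of $\psi$. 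Thus $\H^N\big(W_t + W_t(\cdot - 2\pi)\big)(\vp) = \D_\psi^N g_t(\psi)$, and since $W_t(\vp)/W_t(\pi) = \exp\big(\frac{\pi\psi}{2t} - \frac{\psi^2}{4t}\big) \simeq 1$ on $\{\psi/t \le M_0\}$, it is enough to show $\D_\psi^N g_t(\psi) \simeq W_t(\pi)\, t^{-2N}$ for $\psi \in (0,\pi)$ and $0 < t \le t_0$ with $\psi \le M_0 t$.

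The crucial point is that $\D$ is a derivation, $\D(fg) = (\D f) g + f (\D g)$, so that Leibniz' rule yields
\begin{equation*}
\D_\psi^N g_t(\psi) = 2 W_t(\pi) \sum_{l=0}^{N} \binom{N}{l}\, \D_\psi^{l}\big(e^{-\psi^2/(4t)}\big)\; \D_\psi^{N-l}\Big(\cosh\Big(\frac{\pi\psi}{2t}\Big)\Big).
\end{equation*}
I would then restrict to $\psi \in (0,\pi/2]$, which holds once $t_0 \le \pi/(2M_0)$, and estimate the two families of factors on this region by means of Lemma~\ref{lem:A}. Applying it with $F = \cosh$, together with \eqref{estPhin} and Lemma~\ref{lem:F4rr} (a), and using that $\frac{\pi\psi}{2t} \le \frac{\pi M_0}{2}$ is bounded there, one obtains $\big|\D_\psi^{m}\big(\cosh(\frac{\pi\psi}{2t})\big)\big| \lesssim t^{-2m}$ for $0 \le m \le N$; applying it with $F(s) = e^{-s^2}$, so that $\L^j F(s) = (-2)^j e^{-s^2}$ stays bounded on the relevant range because $\psi^2/(4t) \le M_0^2 t/4$ is small, one obtains $\big|\D_\psi^{l}\big(e^{-\psi^2/(4t)}\big)\big| \lesssim t^{-l}$ for $0 \le l \le N$. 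For the leading summand $l=0$ I would invoke Lemma~\ref{lem:2}, which gives the sharp two-sided bound $\D_\psi^{N}\big(\cosh(\frac{\pi\psi}{2t})\big) \simeq t^{-2N}$ once $t$ is small enough.

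Collecting the terms, the $l=0$ summand equals $2 W_t(\pi)\, e^{-\psi^2/(4t)}\, \D_\psi^{N}\big(\cosh(\frac{\pi\psi}{2t})\big) \simeq W_t(\pi)\, t^{-2N}$ and is positive, whereas each summand with $l \ge 1$ is $\lesssim W_t(\pi)\, t^{-l}\, t^{-2(N-l)} = W_t(\pi)\, t^{-2N+l} \le W_t(\pi)\, t^{-2N+1}$. Hence, for $t$ below a suitable threshold $t_0$, the $l=0$ term dominates the whole sum and $\D_\psi^N g_t(\psi) \simeq W_t(\pi)\, t^{-2N}$, which is what we wanted. I expect this last step — checking that the error terms are of strictly lower order — to be the main obstacle: it works precisely because one application of $\D_\psi$ to the Gaussian $e^{-\psi^2/(4t)}$ costs only a factor $t^{-1}$, while one application to $\cosh(\frac{\pi\psi}{2t})$ costs $t^{-2}$, so moving a $\D_\psi$ off the hyperbolic cosine onto the Gaussian gains a power of $t$; and the bound on $\D_\psi^{l}\big(e^{-\psi^2/(4t)}\big)$ hinges on the argument $\psi^2/(4t)$ being small on $\{\psi \le M_0 t\}$, which is exactly why it pays to separate this factor off rather than work with $g_t$ as a whole.
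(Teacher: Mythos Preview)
Your proof is correct and follows essentially the same route as the paper: the substitution $\psi=\pi-\vp$, the factorization $W_t(\vp)+W_t(\vp-2\pi)=2e^{-\pi^2/(4t)}W_t(\psi)\cosh(\pi\psi/(2t))$, the Leibniz expansion, and the identification of the $\cosh$-heavy term as dominant via Lemma~\ref{lem:2} are all exactly as in the paper. The only cosmetic difference is that you bound $\D_\psi^{l}\big(e^{-\psi^2/(4t)}\big)$ by applying Lemma~\ref{lem:A} directly with $F(s)=e^{-s^2}$, whereas the paper cites Corollary~\ref{cor:ularge} for the equivalent estimate on $\D^{N-k}W_t(\psi)$; since $W_t(\psi)$ differs from $e^{-\psi^2/(4t)}$ only by the constant $(4\pi t)^{-1/2}$, these are the same computation.
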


Lemmas \ref{lem:ularge} and \ref{lem:usmall} are proved in Sections \ref{ssec:ularge} and \ref{ssec:usmall}, respectively.

\subsection{Proof of Lemma \ref{lem:ularge}} \label{ssec:ularge}

For $N=0$ there is nothing to prove, so let $N \ge 1$.
Applying Lemma~\ref{lem:A} with $F = W_1$ and $v = t^{-1/2}$ and using the identity $L^j W_1 = (-1)^j 2^{-j} W_1$, we get
\begin{align} \label{iden111}
\H^N W_t (\vp)
=
\sum_{j=1}^N \frac{(-1)^{N+j}}{2^{j}} t^{-j} W_t (\vp) \Phi_{N,j} (\vp). 
\end{align}
Since $\Phi_{N,N} (\vp) \simeq (\pi - \vp)^{-N}$, $\vp \in (0,\pi)$, term number $N$ in the above sum is comparable to
$W_t(\vp) (\pi - \vp)^{-N} t^{-N}$ for $\vp \in (0,\pi)$ and $t>0$. Using the bounds 
$| \Phi_{N,j} (\vp) | \lesssim (\pi - \vp)^{-2N+j}$, $\vp \in (0,\pi)$ for $1 \le j < N$,
we see that the remaining terms are controlled by
\begin{align*} 
W_t (\vp)
\sum_{j=1}^{N-1} t^{-j} (\pi - \vp)^{-2N+j}
& =
W_t (\vp) (\pi - \vp)^{-N} t^{-N} \sum_{j=1}^{N-1}
\Big( \frac{t}{\pi - \vp} \Big)^{N - j},
\end{align*}
uniformly in $\vp \in (0,\pi)$ and $t > 0$. 
Choosing $M_0$ large enough, we can make term number $N$ dominate, and Lemma~\ref{lem:ularge} follows.
\qed

\subsection{Proof of Lemma \ref{lem:usmall}} \label{ssec:usmall}
For $N=0$ the conclusion of Lemma~\ref{lem:usmall} is straightforward, so assume $N \ge 1$. 
Letting $\psi = \pi - \vp$, we have $\D_\psi = - \D_\vp$ and 
\begin{align*} 
W_t (\vp) + W_t (\vp - 2\pi)
=
W_t ( \pi - \psi ) + W_t ( \pi + \psi ) 
=
2 e^{-\pi^2/(4t)} W_t (\psi) \cosh \frac{\pi \psi}{2t}.
\end{align*} 
Thus
\begin{align} \label{iden20}
(-\D)^N \big[ W_t(\vp) + W_t(\vp - 2\pi) \big] 
=
2 e^{-\pi^2/(4t)}
\sum_{k=0}^N \binom{N}{k} \D^{N-k} W_t (\psi) 
\D_\psi^k \Big( \cosh \frac{\pi \psi}{2t} \Big). 
\end{align}
Here $\psi/t \le M_0$, and by choosing $t_0$ small we can assume that $0 < \psi \le \pi/2$.
Then Corollary~\ref{cor:ularge} implies that for some $t_0 > 0$ and all $0 \le k < N$
\begin{align*} 
| \D^{N-k} W_t (\psi) |
\simeq
W_t (\psi) t^{-(N-k)}, \qquad \psi \in (0, \pi/2], \quad 0 < t \le t_0.
\end{align*}
Applying Lemma~\ref{lem:2} (taken with $\pi M_0/2$ instead of $M_0$ and $v = \pi/(2t)$) and making $t_0$ smaller if necessary, we get
\begin{align*} 
\D_\psi^k \bigg( \cosh \frac{\pi \psi}{2t} \bigg) 
\simeq 
t^{-2k}, \qquad 0 \le k \le N,
\end{align*}
uniformly in $\psi \in (0,\pi/2]$ and $0 < t \le t_0$ satisfying 
$\psi/t \le M_0$.

The term with $k=N$ in the sum in \eqref{iden20} is 
\begin{align*} 
2 e^{-\pi^2/(4t)}
W_t (\psi) 
\D_\psi^N \Big( \cosh \frac{\pi \psi}{2t} \Big)
\simeq
e^{-\pi^2/(4t)}
W_t (\psi) t^{-2N}.
\end{align*}
The terms with $k<N$ in this sum can be made much smaller as $t$ approaches to $0$,
since they are controlled by $e^{-\pi^2/(4t)}
W_t (\psi) t^{-N-k}$. It follows that for sufficiently small $t_0>0$ 
\begin{align*} 
(-\D)^N \big[ W_t(\vp) + W_t(\vp - 2\pi) \big] 
\simeq
e^{-\pi^2/(4t)}
W_t (\psi) t^{-2N},
\end{align*}
uniformly in $\psi \in (0,\pi/2]$ and $0 < t \le t_0$ satisfying 
$\psi/t \le M_0$.
Finally, using the relation $e^{-\pi^2/(4t)} W_t (\psi) \simeq W_t (\vp)$ for $\psi/t \le M_0$, we conclude the proof.
\qed

\subsection{Proof of Lemma \ref{lem:thetalar}} \label{ssec:1}

The case $N=0$ is straightforward, so assume that $N \ge 1$. We will show that there exists an $M_1 \ge 1$ such that
\begin{align} \label{est1}
 \sum_{n=1}^{\8} 
\big| \D^N \big[ W_t(\vp - 2\pi n) + W_t(\vp + 2\pi n) \big]  \big| 
\lesssim
W_t(\vp) (\pi - \vp)^{-N} t^{-N} \big( e^{-\pi^2/(4t)} + e^{-\pi (\pi - \vp)/t} \big),
\end{align}
uniformly in $\vp \in (0,\pi)$ and $0 < t \le 1$ satisfying 
$(\pi - \vp)/t \ge M_1$.

First observe that
\begin{align*}
W_t(\vp - 2\pi n) + W_t(\vp + 2\pi n) 
=
2 e^{-\pi^2n^2/t} W_t(\vp) \cosh \frac{\pi n \vp}{t},
\qquad \vp \in (0,\pi), \quad t > 0, \quad n \ge 1.
\end{align*}
Consequently, using Leibniz' rule for $\D$ we obtain
\begin{align*}
\D^N \big[ W_t(\vp - 2\pi n) + W_t(\vp + 2\pi n) \big] 
=
2 e^{-\pi^2n^2/t} \sum_{k=0}^N \binom{N}{k} \D^{N-k} W_t (\vp) 
\D_\vp^k \bigg( \cosh \frac{\pi n \vp}{t} \bigg).
\end{align*}
Now using Lemma~\ref{lem:ularge} and Lemma~\ref{lem:ch_gl} (with $v = \pi n/t$) we infer that there exists $M_1 \ge 1$ such that
\begin{align} \nonumber
& \big| \D^N \big[ W_t(\vp - 2\pi n) + W_t(\vp + 2\pi n) \big] \big| \\ \nonumber
& \qquad \lesssim
e^{-\pi^2 n^2/t} \sum_{k=0}^N W_t (\vp) (\pi - \vp)^{-(N-k)} t^{-(N-k)}
e^{\pi n \vp/t} \Big( \frac{n}{t} \Big)^{2k} (\pi - \vp)^{-2k} \\ \label{est3}
& \qquad \lesssim
e^{-\pi^2 n^2/t + \pi n \vp/t} W_t (\vp) \Big( \frac{t}{\pi - \vp} \Big)^{2N} 
\Big( \frac{n}{t} \Big)^{4N}  
\lesssim
e^{-\pi^2 n^2/t + \pi n \vp/t} W_t (\vp) \Big( \frac{n}{t} \Big)^{4N}
\end{align}
holds uniformly in $\vp \in (0,\pi)$, $0 < t \le 1$ and $n \ge 1$ satisfying 
$(\pi - \vp)/t \ge M_1$. 
Summing over $n \ge 2$, we get
\begin{align*}
& \sum_{n=2}^{\8} 
\big| \D^N \big[ W_t(\vp - 2\pi n) + W_t(\vp + 2\pi n) \big]  \big| \\
& \quad \lesssim 
\sum_{n=2}^{\8} e^{-\pi^2 n(n-1)/t } W_t (\vp) \Big( \frac{n}{t} \Big)^{4N} 
\lesssim
W_t (\vp)
\sum_{n=2}^{\8} e^{-\pi^2 n/(2t)}
\lesssim
W_t(\vp) (\pi - \vp)^{-N} t^{-N} e^{-\pi^2/t},
\end{align*}
uniformly in $\vp \in (0,\pi)$ and $0 < t \le 1$ satisfying $(\pi - \vp)/t \ge M_1$.

We now focus on the term with $n=1$ in \eqref{est1}. Using \eqref{est3} we see that
\begin{align*}
\big| \D^N \big[ W_t(\vp - 2\pi) + W_t(\vp+ 2\pi) \big] \big| 
\lesssim
e^{-\pi^2/(2t)} W_t (\vp) t^{-4N}
\lesssim
e^{-\pi^2/(4t)} W_t (\vp) (\pi - \vp)^{-N} t^{-N},
\end{align*}
uniformly in $\vp \in (0,\pi/2]$ and $0 < t \le 1$ satisfying  $(\pi - \vp)/t \ge M_1$.
Therefore, in order to prove \eqref{est1} it is enough to show that
\begin{align} \label{red3}
\big| \D^N \big[ W_t(\vp - 2\pi) + W_t(\vp + 2\pi) \big] \big| 
\lesssim
W_t (\vp) (\pi - \vp)^{-N} t^{-N} e^{-\pi (\pi - \vp)/t},
\end{align}
uniformly in $\vp \in (\pi/2,\pi)$ and $0 < t \le 1$ satisfying 
$(\pi - \vp)/t \ge 1$.

Using \eqref{iden111} and the estimates
\[
| \Phi_{N,j} (\vp \pm 2\pi) |
\lesssim
(\pi - \vp)^{-2N+j}, \qquad \vp \in (\pi/2,\pi), \quad 1 \le j \le N,
\]
which follow from Lemma~\ref{lem:A}, we obtain
\begin{align*}
\big| \D^N W_t(\vp \pm 2\pi) \big|
\lesssim
W_t(\vp \pm 2\pi) (\pi - \vp)^{-N} t^{-N},
\end{align*}
uniformly in $\vp \in (\pi/2,\pi)$ and $t > 0$ satisfying 
$(\pi - \vp)/t \ge 1$. 
Combining this with the relations
\begin{align*}
W_t(\vp + 2\pi) \le W_t(\vp - 2\pi) 
= W_t(\vp) e^{-\pi(\pi - \vp)/t},
\qquad \vp \in (0,\pi), \quad t > 0,
\end{align*}
we get \eqref{red3} and hence also \eqref{est1}.

Next, an application of Lemma~\ref{lem:ularge} shows that there exists $M_2 > 0$ such that 
\begin{align*}
\H^N W_t (\vp)
\simeq
W_t(\vp) (\pi - \vp)^{-N} t^{-N},
\end{align*}
uniformly in $\vp \in (0,\pi)$ and $t>0$ satisfying 
$(\pi - \vp)/t \ge M_2$.
Finally, combining this with \eqref{est1} and the fact that 
$e^{-\pi^2/(4t)} + e^{-\pi (\pi - \vp)/t} \to 0$ as $t \to 0^+$ and 
$(\pi - \vp)/t \to \8$, we see that we can find 
$0 < t_0 \le 1$ and $M_0 \ge M_1 + M_2$ such that 
\begin{align*} 
\H^N \vt_t (\vp)
\simeq
W_t(\vp) (\pi - \vp)^{-N} t^{-N},
\end{align*}
uniformly in $\vp \in (0,\pi)$ and $0 < t \le t_0$ satisfying 
$(\pi - \vp)/t \ge M_0$.
This finishes the proof of Lemma~\ref{lem:thetalar}.
\qed

\subsection{Proof of Lemma \ref{lem:thetasm}} \label{ssec:7}
Notice that for $N = 0$ Lemma~\ref{lem:thetasm} follows in a straightforward way. Therefore we assume $N \ge 1$. 
We first prove that for any $M_0 > 0$ there exists $t_1 > 0$ such that
\begin{align} \label{est2}
\sum_{n=1}^{\8} 
\big| (-\D)^N \big[ W_t(\vp + 2\pi n) + W_t\big(\vp - 2\pi (n+1) \big) \big] \big|
\lesssim
W_t(\vp) t^{-2N} e^{-\pi^2/(4t)},
\end{align}
uniformly in $\vp \in [\pi/2,\pi)$ and $0 < t \le t_1$ satisfying 
$(\pi - \vp)/t \le M_0$.

Almost as in the proof of Lemma~\ref{lem:usmall}, we let $\psi = \pi - \vp \in (0, \pi/2]$ and get
\begin{align*} 
W_t(\vp + 2\pi n) + W_t\big(\vp - 2\pi (n+1) \big)
& =
W_t \big( (2n+1)\pi - \psi \big) + W_t \big( (2n+1)\pi + \psi \big) \\
& =
2e^{-\pi^2 (2n+1)^2/(4t)} W_t(\psi) \cosh \frac{\pi (2n+1) \psi}{2t}
\end{align*}
for $t > 0$ and $n \ge 1$. 
Using Leibniz' rule for $\D$, we get
\begin{align*} 
& (-\D)^N \big[ W_t(\vp + 2\pi n) + W_t\big(\vp - 2\pi (n+1) \big) \big] \\
& \qquad =
2e^{-\pi^2 (2n+1)^2/(4t)} 
\sum_{k=0}^N \binom{N}{k} \D^{N-k} W_t (\psi) 
\D_\psi^k \bigg( \cosh \frac{\pi (2n+1) \psi}{2t} \bigg).
\end{align*}
In view of Corollary~\ref{cor:ularge}, there exists $t_1 > 0$ such that
\begin{align*} 
\big| \D^{N-k} W_t (\psi) \big|
\lesssim
W_t (\psi) t^{-(N-k)}, 
\qquad \psi \in (0, \pi/2], \quad 0 < t \le t_1, \quad 0 \le k \le N.
\end{align*}
Further, using the relation 
\begin{align*} 
W_t (\psi) \simeq W_t (\vp) e^{\pi^2/(4t)}, 
\qquad \psi \in (0, \pi/2], \quad t > 0, \quad \psi/t \le M_0,
\end{align*}
and Lemma~\ref{lem:ch_gl} (with $v = \pi (2n+1)/(2t)$), we arrive at
\begin{align*}
& \big| (-\D)^N \big[ W_t(\vp + 2\pi n) + W_t\big(\vp - 2\pi (n+1) \big) \big]  \big| \\
& \qquad \lesssim
e^{-\pi^2 (2n+1)^2/(4t)} 
\sum_{k=0}^N W_t(\vp) e^{\pi^2/(4t)} t^{-(N-k)} e^{\pi (2n+1) \psi/(2t)}
\Big( \frac{n}{t} \Big)^{2k} \\
& \qquad \lesssim
e^{-\pi^2 (2n+1)^2/(4t) + \pi^2/(4t) + \pi^2 (2n+1)/(2t)} W_t(\vp)
\Big( \frac{n}{t} \Big)^{2N},
\end{align*}
uniformly in $\vp \in [\pi/2,\pi)$, $0 < t \le t_1$ and $n \ge 1$ satisfying 
$(\pi - \vp)/t \le M_0$.
Since $(2n+1)^2 - 1 - 2(2n+1) = 4n^2 - 2 \ge 2n$, $n \ge 1$, we see that the left-hand side of \eqref{est2} is controlled by
\begin{align*}
\sum_{n=1}^{\8}
e^{-\pi^2 n/(2t)} W_t(\vp)
\Big( \frac{n}{t} \Big)^{2N}
\lesssim
W_t(\vp) \sum_{n=1}^{\8} e^{-\pi^2 n/(4t)}
\lesssim
W_t(\vp) t^{-2N} e^{-\pi^2/(4t)},
\end{align*}
uniformly in $\vp \in [\pi/2,\pi)$ and $0 < t \le t_1$ satisfying 
$(\pi - \vp)/t \le M_0$. 
This shows \eqref{est2}. 

Finally, combining \eqref{est2} with Lemma~\ref{lem:usmall} and using the fact that 
$e^{-\pi^2/(4t)} \to 0$ as $t \to 0^+$ we may choose $t_0 > 0$ such that
\begin{align*} 
\H^N \vt_t (\vp)
\simeq
W_t(\vp) t^{-2N},
\end{align*}
uniformly in $\vp \in [\pi/2,\pi)$ and $0 < t \le t_0$ satisfying 
$(\pi - \vp)/t \le M_0$. 
\qed

\vspace{0,5cm}
\section{Proof of Theorem \ref{thm:step2}} \label{sec:st2}

The proof reduces to a thorough analysis of a one-dimensional integral.
Changing the variable of integration in \eqref{reduc1} and using a basic trigonometric identity leads to the formula
\begin{equation*}  
K_t^d(\vp) = \frac{\tilde{c}_d}{(\cos\frac{\vp}2)^{d-2}} \int_{\vp/2}^{\pi-\vp/2} K_{t/4}^{2d-1}(\psi)
	\big( \cos\vp - \cos 2\psi\big)^{(d-3)/2} \sin\psi\, d\psi, \qquad d \ge 2,
\end{equation*}
with $\tilde{c}_d = 2^{-(d-3)/2}c_d$; here $c_d$ is as in \eqref{reduc1}.
Therefore, taking into account Theorem \ref{thm:step1},
in order to prove Theorem \ref{thm:step2} it is enough to show that for any $N \ge 2$
\begin{align*}
& \frac{1}{(\cos\frac{\vp}2)^{N-2}} \int_{\vp/2}^{\pi-\vp/2} \frac{1}{(t+\pi-\psi)^{N-1} \, t^{N-1/2}}
	e^{-\psi^2/t} (\cos\vp - \cos 2\psi)^{(N-3)/2} \sin\psi \, d\psi \\
& \qquad \simeq \frac{1}{(t+\pi-\vp)^{(N-1)/2} \, t^{N/2}} e^{-\vp^2/(4t)},
\end{align*}
uniformly in $\vp \in (0,\pi)$ and $0 < t \le T$ for some fixed $T>0$, say $T=1$.
Moreover, here we can replace the upper limit of integration by $\pi/2$, since the essential contribution
in the last integral comes from integrating over the first half of the interval, that is $(\vp/2,\pi/2)$.
This is seen by reflecting $\psi \mapsto \pi - \psi$ and then using the simple bound
$$
\frac{1}{(t+\psi)^{N-1}} e^{-\pi(\pi-2\psi)/t} \lesssim 1, \qquad \psi \in (0,\pi/2), \quad 0 < t \le 1.
$$

Changing now the variable of integration $\psi = \gamma+\vp/2$ and then replacing the difference of cosines by
a product of sines, we see that it is enough to prove that
\begin{align*}
& \int_{0}^{(\pi-\vp)/2} \frac{1}{(t+\pi-\gamma - \frac{\vp}{2})^{N-1} \, t^{N-1/2}}
	e^{-(\gamma+\vp/2)^2/t} \big[\sin(\gamma+\vp) \sin\gamma \big]^{(N-3)/2} \sin\Big(\gamma+\frac{\vp}{2}\Big) \, d\gamma \\
& \qquad \simeq \Big(\cos\frac{\vp}2\Big)^{N-2}\frac{1}{(t+\pi-\vp)^{(N-1)/2} \, t^{N/2}} e^{-\vp^2/(4t)},
\end{align*}
uniformly in $\vp \in (0,\pi)$ and $0 < t \le 1$.
Since for such $\vp$, $t$ and $\gamma \in (0,(\pi-\vp)/2)$ one has
$\cos(\vp/2) \simeq \pi - \vp$, $t+\pi-\gamma-\vp/2 \simeq 1$, $\sin(\gamma+\vp)\simeq (\pi-\vp)(\gamma+\vp)$,
$\sin\gamma \simeq \gamma$, $\sin(\gamma+\vp/2) \simeq \gamma+\vp$, this task reduces to proving that
$$
\int_0^{(\pi-\vp)/2} \big[ \gamma (\gamma+\vp)\big]^{(N-3)/2} e^{-\gamma(\gamma+\vp)/t} (\gamma+\vp)\, d\gamma
\simeq \big[ t \wedge (\pi-\vp)\big]^{(N-1)/2},
$$
uniformly in the $\vp$ and $t$ in question.

Denoting the last integral by $I$ and changing the variable
$\gamma(\gamma+\vp)/t=s$ we get
\begin{align*}
I & \simeq t^{(N-1)/2} \int_0^{\frac{(\pi-\vp)(\pi+\vp)}{4t}} s^{(N-3)/2} e^{-s}\, ds \\
	& \simeq t^{(N-1)/2}\bigg( 1 \wedge \frac{(\pi-\vp)(\pi+\vp)}{4t} \bigg)^{(N-1)/2}
	\simeq \big[ t \wedge (\pi-\vp)\big]^{(N-1)/2},
\end{align*}
as desired. Theorem \ref{thm:step2} follows.

\end{document}